\newcommand{\abs}[1]{\left\lvert #1 \right\rvert}
\newcommand{\Int}{\mathrm{Int}~}
\newcommand{\norm}[1]{\left\lVert #1 \right\rVert}
\newcommand{\setof}[2]{\left\{ #1 ~ \middle\arrowvert ~ #2 \right\}}
\newcommand{\prodscal}[2]{\left\langle #1 ~ \middle\arrowvert ~ #2 \right\rangle}
\DeclareRobustCommand*{\refThmSurfaceAnosov}{\ref*{thmSurfaceAnosov}}
\DeclareRobustCommand*{\refThmBilliardsHyp}{\ref*{thmBilliardsHyp}}
\DeclareRobustCommand*{\refThmRiccatiBilliard}{\ref*{thmRiccatiBilliard}}
\def\url@leostyle{
 \@ifundefined{selectfont}{\def\UrlFont{\sf}}{\def\UrlFont{\small\ttfamily}}}
\newtheorem{prop}{Proposition}[section]
\newtheorem{thm}[prop]{Theorem}
\newtheorem{lemme}[prop]{Lemma}
\theoremstyle{remark}
\theoremstyle{definition}
\newtheorem{definition}[prop]{Definition}
\newcommand{\deriv}[2]{\frac{\partial #1}{\partial #2}}
\tikzset{
    scale plot marks/.is choice,
    scale plot marks/false/.code={
        \def\pgfuseplotmark##1{\pgftransformresetnontranslations\csname pgf@plot@mark@##1\endcsname}
    },
    scale plot marks/true/.style={},
    scale plot marks/.default=true
}
\tikzset{fleche/.style args={#1:#2}{ postaction = decorate,decoration={name=markings,mark=at position #1 with {\arrow[#2,scale=2]{>}}}}}
\tikzstyle{vertex}=[circle, fill=black, inner sep=0pt, minimum size=5pt]
\tikzstyle{edge}=[fill=black!0, scale=0.8]
\tikzstyle{fixed}=[rectangle, fill={rgb:blue,2;black,2}, inner sep=0pt, minimum size=5pt]
\tikzstyle{input}=[regular polygon, regular polygon sides=3, rotate=180, fill={rgb:red,0;green,2;black,2}, inner sep=0pt, minimum size=8pt]
\tikzstyle{output}=[regular polygon, regular polygon sides=3, fill={rgb:red,2;black,2}, inner sep=0pt, minimum size=8pt]
\newcommand{\parallelsum}{\mathbin{\!/\mkern-5mu/\!}}
\begin{document}

\selectlanguage{english}

\title{Uniform hyperbolicity in nonflat billiards}
\author{Mickaël Kourganoff\footnote{Institut Fourier, Grenoble University, France, mickael.kourganoff@univ-grenoble-alpes.fr}}
\date{}

\maketitle

\begin{abstract}
Uniform hyperbolicity is a strong chaotic property which holds, in particular, for Sinai billiards. In this paper, we consider the case of a nonflat billiard, that is, a Riemannian surface with boundary. Each trajectory follows the geodesic flow in the interior of the billiard, and bounces when it meets the boundary. We give a sufficient condition for a nonflat billiard to be uniformly hyperbolic. As a particular case, we obtain a new criterion to show that a closed surface has an Anosov geodesic flow.
\end{abstract}

\section{Introduction and notations}
In this paper, a \emph{smooth billiard} is a connected compact subset $D$ of a Riemannian surface $M$, such that $D$ has a smooth boundary while $M$ has no boundary. By ``smooth boundary'', we mean that each component of $\partial D$ is the image of a smooth embedding $\Gamma : \mathbb R / l \mathbb Z \to M$, with unit speed, where $l$ is the length of the component. Each curve $\Gamma$ is called a \emph{wall} of $D$: it has a unit tangent vector $T$ and a unit normal vector $N$ pointing toward $\Int D$. A billiard whose walls have negative curvature is said to be \emph{dispersing}.

Most of the billiards which appear in the literature are \emph{flat}, and more precisely, in the ambiant surface $M = \mathbb R^2$ or $M = \mathbb T^2$. Chaotic billiards in general Riemannian surfaces were studied, for example, in~\cite{MR807598}, \cite{MR1022522}, and~\cite{MR3661861}. For billiards in surfaces of constant curvature, see also~\cite{MR1422542} and~\cite{MR1729878}. In this paper, we focus on \emph{uniform hyperbolicity} (see Definition~\ref{defHyperbolicBilliard}) for billiards in general surfaces.

One defines the phase space $\Omega = T^1(\mathrm{Int}~D)$, and the billiard flow $\phi_t : \Omega \to \Omega$, in the following way:
\begin{enumerate}
\item As long as it does not hit a wall, the particle follows a geodesic in $M$;
\item When it arrives to the boundary of the billiard, the particle bounces, following the billiard reflection law: the angle between the particle's speed vector and the boundary's tangent line is preserved (Figure~\ref{figReflexion}).
\end{enumerate}

The flow $\phi_t$ is not defined at all times :
\begin{enumerate}
\item It is not defined at times when the particle is on the boundary of the billiard. Of course, one could extend the definition to such $t$, but the flow obtained in this way would not be continuous\footnote{Many authors change the topology of $\Omega$ in order to make the flow continuous, but it cannot be made differentiable.}.
\item When the particle makes a grazing collision with a wall at a time $t_0 > 0$, \emph{i.e.} collides with the boundary with an angle $\theta=0$, the flow stops being defined for all times $t \geq t_0$. Although one could extend continuously the definition of the trajectory after such a collision, the differentiability of the flow would be lost.
\end{enumerate}

\begin{figure}[!ht]
\begin{center}
\begin{tikzpicture}
	\clip (-4,-1) rectangle (4,2.5);
    \path[fill=black!20] (-4,-4) rectangle (4,4);
    \path[draw=black, thick, fill=white] (0,-10) circle (10);
    \draw (1,0) arc (0:45:1);
    \node[] at (20:1.5)  {$\theta$};
    \draw (-1,0) arc (180:135:1);
    \node[] at (160:1.5)  {$\theta$};
    \node[circle, fill=black, inner sep=0pt, minimum size=3pt] (qc) at (0,0) {};
    \path[draw=black, fleche=0.3:black, fleche=0.7:black] (-4,4) -- (0,0) -- (4,4);
    \path[draw=black, thick, dashed] (-4,0) -- (4,0);
\end{tikzpicture}

\end{center}
\caption{The billiard reflection law.} \label{figReflexion}
\end{figure}
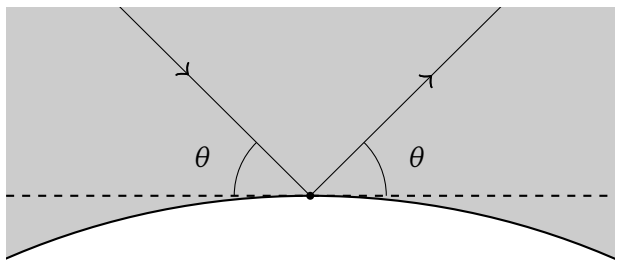

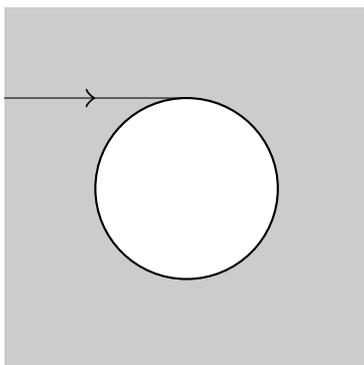
\begin{figure}[!ht]
	\begin{center}
		\begin{tikzpicture}[scale=1.2]
    		\path[fill=black!20] (-2,-2) rectangle (2,2);
    		\path[draw=black, thick, fill=white] (0,0) circle (1);
    		\path[draw=black, fleche=0.5:black] (-2,1) -- (0,1);
		\end{tikzpicture}
\caption{A grazing collision on a dispersing billiard in $\mathbb T^2$. The flow stops being defined after this time.}
	\end{center}
\end{figure}

We define $\tilde \Omega$ as the set of all $(x, v) \in \Omega$ such that the trajectory starting from $(x, v)$ does not contain any grazing collision, in the past or the future. Notice that $\tilde \Omega$ is a residual set of full measure, invariant under the flow $\phi_t$, and that $\phi_t$ is $C^\infty$ on $\tilde \Omega$.

In the special case where $D$ has no boundary, the billiard flow is simply the geodesic flow and $\tilde \Omega = \Omega = T^1 D$.

\bigskip \noindent {\bf Uniform hyperbolicity.} We define uniform hyperbolicity in the case of billiards. This definition is given in a more abstract framework in~\cite{MR2229799}, but here we adapt it directly to billiard flows.
%
%
%
%

\begin{definition} \label{defHyperbolicBilliard}
The billiard flow $\phi_t$ is \emph{uniformly hyperbolic} if at each point $x \in \tilde \Omega$, there exists a decomposition of $T_x\Omega$, invariant under the flow,
\[ T_{x} \Omega = E_x^0 \oplus E_x^u \oplus E_x^s \]
where $E_x^0 = \mathbb R \left. \frac{d}{dt}\right|_{t=0} \phi_t(x)$, such that
\[ \lVert D\phi_x^t|_{E_x^s} \rVert \leq a \lambda^t, \quad \lVert D\phi_x^{-t}|_{E_x^u} \rVert \leq a \lambda^{t} \]
(for some $a > 0$ and $\lambda \in (0,1)$, which do not depend on $x$).
\end{definition}

\bigskip \noindent \textit{Remark.} If the billiard $D$ has no wall (which means that the billiard flow is a geodesic flow), we may use the word \emph{Anosov} instead of \emph{uniformly hyperbolic}.


\section{Results} In this paper, we give a sufficient condition for a (possibly nonflat) billiard to be uniformly hyperbolic.


\subsection{The case of geodesic flows} First, let us consider the case where $D$ has no boundary: the billiard flow is simply the geodesic flow on $D$. All surfaces with negative curvature have an Anosov geodesic flow: according to Arnold and Avez~\cite{MR0209436}, the first proof of this fact goes back to 1898~\cite{hadamard1898surfaces}. Later, it was extended to all manifolds with negative sectional curvature (a modern proof is available in \cite{MR1326374}). But the negative curvature assumption is not necessary for a geodesic flow to be Anosov. To prove that a geodesic flow is Anosov, one may examine the solutions of the Riccati equation
\[ u'(t) = - K(t) - u^2(t) \]
where $K$ is the Gaussian curvature of the surface, and use the following criterion:

\begin{thm} \label{conditionConeRiccati}
Let $M$ be a closed surface. Assume that there exists $t_0 > 0$ such that for any geodesic $\gamma: [0,1] \to M$, and any solution $u$ of the Riccati equation along this geodesic such that $u(0) = 0$, $u$ is well-defined on $[0,t_0]$ and $u(t_0) > 0$. Then the geodesic flow $\phi_t : T^1 M \to T^1 M$ is Anosov.
\end{thm}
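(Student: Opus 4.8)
The plan is to establish hyperbolicity through the classical theory of invariant cone fields for the geodesic flow, using the Riccati equation as the bridge between curvature information and the dynamics of Jacobi fields. Recall that for a unit-speed geodesic $\gamma$, a perpendicular Jacobi field $J$ written as $J(t) = y(t) E(t)$ along a parallel unit normal $E$ satisfies $y'' + K(t) y = 0$, and the ratio $u = y'/y$ satisfies the Riccati equation $u' = -K - u^2$. A solution $u$ of the Riccati equation that stays defined for all time (no blow-up to $-\infty$) on a half-line corresponds to a Jacobi field that never vanishes there, which is exactly the infinitesimal data defining the stable/unstable subspaces: writing $T_x(T^1 M) = \mathbb{R}X \oplus \{(J(0),J'(0))\}$ in the Jacobi parametrization, a candidate stable direction at $x$ is the graph $\{(w, u w): w \in E^\perp\}$ for a suitable value $u = u(x)$, and likewise for the unstable direction with a value $u^+(x)$.

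First I would show that the hypothesis forces every forward Riccati solution starting at $u(0)=0$ to remain defined for all $t \ge 0$: by the time-$t_0$ positivity and a compactness/continuation argument, once $u$ becomes positive it cannot blow up to $-\infty$ in finite time without first passing through a phase where the hypothesis (applied to the shifted geodesic) is violated — more precisely, I would iterate the criterion along the geodesic, partitioning $[0,\infty)$ into intervals of length $t_0$ and checking that the sign and boundedness propagate. This yields a forward-invariant family of Riccati solutions $u^+_x \ge 0$ (the "unstable" data) depending measurably — in fact continuously on $\tilde\Omega$ — on $x$, and by time-reversal a backward-invariant family $u^-_x \le 0$. These define the candidate splitting $E^u_x, E^s_x$. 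The strict inequality $u(t_0) > 0$ (not merely $\ge 0$) is what provides uniformity: it gives a definite amount of expansion $\int_0^{t_0} u^+ \, dt$ bounded below over the compact unit tangent bundle, so that $\|D\phi^t|_{E^u}\|$ — which along a Jacobi field is governed by $\exp(\int_0^t u^+ ds)$ — grows at a uniform exponential rate, and symmetrically for contraction on $E^s$.

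The key steps in order: (1) set up the Jacobi/Riccati dictionary and identify candidate subbundles from bounded Riccati solutions; (2) prove global existence of the forward and backward solutions $u^\pm$ using the $t_0$-hypothesis iteratively; (3) verify invariance of the splitting under $D\phi_t$ — this is automatic from the way Jacobi fields transform; (4) check transversality $u^+_x > u^-_x$ so that $E^u \oplus E^s \oplus E^0$ is all of $T_x\Omega$ — this follows because a solution that is $\ge 0$ forward and a solution that is $\le 0$ backward cannot coincide unless both are identically zero, which is excluded by the strict positivity at $t_0$; (5) extract uniform exponential bounds from the uniform lower bound on $\int u^+$ over the (compact) phase space and a Gronwall estimate, translating Riccati growth into norms of $D\phi_t$; (6) note there are no walls, so $\tilde\Omega = T^1 M$ and the flow is everywhere smooth, so Definition~\ref{defHyperbolicBilliard} applies verbatim with the geodesic vector field as $E^0$.

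The main obstacle I anticipate is step (2) together with the uniformity in step (5): showing that the strict inequality at a single time $t_0$, holding for \emph{every} unit-speed geodesic segment of length $1$, bootstraps into genuine uniform hyperbolicity rather than mere nonuniform hyperbolicity. The delicate point is controlling the Riccati solution on the (possibly long) initial transient before $u^+$ becomes positive and stays positive, and ruling out that $u$ dips back toward $-\infty$ between the grid points $k t_0$; here one must use the hypothesis not just on $[0,t_0]$ but on every translate $[s, s+1]$, and combine this with the compactness of $T^1 M$ and continuous dependence of solutions of the Riccati ODE on initial conditions and on the geodesic. Once a uniform-in-$x$ lower bound on the average of $u^+$ over windows of fixed length is secured, the exponential estimates and the closedness of the cone fields follow by standard arguments.
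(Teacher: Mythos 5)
Your plan is essentially sound, but it follows a genuinely different route from the paper. The paper does not prove Theorem~\ref{conditionConeRiccati} directly: it observes that it is the constant-step case of Theorem~\ref{improvedConditionConeRiccati}, itself the boundaryless case of Theorem~\ref{thmRiccatiBilliard}, and that theorem is proved by Wojtkowski's cone method --- the discretized derivative maps $A_k = D\phi_{t_{k+1}-t_k}$ acting on the plane orthogonal to the flow (in the coordinates $(y,\dot y)$ of an orthogonal Jacobi field) are shown to map the quadrant $\{y\dot y>0\}$ strictly inside a cone $C_\epsilon$, expansion is extracted from the quadratic form $N(y,\dot y)=\sqrt{y\dot y}$ and the determinant condition, and the invariant subbundles are obtained as nested intersections of cone images (Proposition~\ref{propCones}). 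You instead follow the classical Green--Eberlein construction: build the unstable and stable Riccati solutions $u^\pm$ as limits of solutions vanishing far in the past/future, get uniform bounds $m\le u^+\le\alpha$ and $-\alpha\le u^-\le -m$ from the comparison principle plus compactness of $T^1M$, and read off exponential growth of $|y(t)|=|y(0)|\exp\bigl(\int_0^t u^+\bigr)$. Your reduction in step (2) is correct as stated once phrased via the comparison principle (solutions of a scalar Riccati equation never cross, so $u(kt_0)>0$ forces $u\ge w_k$ on $[kt_0,(k+1)t_0]$, where $w_k$ vanishes at $kt_0$ and is controlled by the hypothesis applied to the shifted geodesic), and your use of compactness to get the uniform constants $m$ and $-A$ is exactly what the paper implicitly uses to pass from Theorem~\ref{conditionConeRiccati} to Theorem~\ref{improvedConditionConeRiccati}. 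What the paper's route buys is that invariance of $E^{u}$ and $E^{s}$ is structural (an intersection of forward images of cones is automatically carried to the corresponding intersection), and that the expansion constant comes out of a two-line computation with $N$; what your route buys is a more explicit description of the splitting as graphs of $u^\pm$.

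The one place where your write-up is too quick is the claim that invariance of the splitting ``is automatic from the way Jacobi fields transform.'' The subbundle you define at $x$ is a limit of solutions anchored at the grid points $-kt_0$, while the subbundle at $\phi_s x$ for $s\notin t_0\mathbb Z$ is a limit over a shifted grid; since the hypothesis only controls the sign of $u$ at the grid times and $u$ may be negative in between, the monotone-limit argument does not immediately show the two limits agree. You need either a projective contraction estimate (all solutions lying in $[0,\alpha]$ at time $-T$ converge together as $T\to\infty$ --- which is essentially the cone-expansion estimate read on slopes), or to replace the limit by the manifestly invariant nested intersection of cone images, as the paper does. Likewise, the asserted continuity of $u^+$ on $T^1M$ is neither obvious nor needed: the uniform bounds $u^+\ge u_1\ge m>0$, obtained from compactness applied to the continuous one-step solution $u_1$, are what the exponential estimates actually require.
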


Theorem~\ref{conditionConeRiccati} was mentioned in~\cite{donnay2003anosov} and~\cite{magalhaes2013geometry}, without details about the proof. In~\cite{MR3508162}, we apply Theorem~\ref{conditionConeRiccati} to give new examples of surfaces whose geodesic flow is Anosov while their curvature is not negative everywhere. The genus of such surfaces is necessarily at least $2$~\cite{MR0377980}.

In fact, it is possible to improve this theorem by considering an increasing sequence of times $(t_k)_{k \in \mathbb Z} \in \mathbb R^{\mathbb Z}$:
\begin{thm} \label{improvedConditionConeRiccati}
Let $M$ be a closed surface. Assume that there exist $m > 0$ and $C > c > 0$ such that for any geodesic $\gamma: \mathbb R \to M$, there exists an increasing sequence of times $(t_k)_{k \in \mathbb Z} \in \mathbb R^{\mathbb Z}$ with $c \leq t_{k+1} - t_k \leq C$, such that the solution $u$ of the Riccati equation with initial condition $u(t_k) = 0$ is defined on the interval $[t_k, t_{k+1}]$, and $u(t_{k+1}) > m$. Then the geodesic flow $\phi_t : T^1 M \to T^1 M$ is Anosov.
\end{thm}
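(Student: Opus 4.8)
The plan is to build an invariant cone field in the unit tangent bundle and show that it is eventually strictly contracted (resp.\ expanded) along the flow, then invoke the standard criterion that such a cone field forces an Anosov splitting. The natural coordinates are the Riccati ones: for a Jacobi field $J$ along a geodesic $\gamma$ (orthogonal to $\dot\gamma$), the quantity $u = J'/J$ satisfies $u' = -K - u^2$, and $u$ represents (up to identification) a line in the two-dimensional space of perpendicular Jacobi fields, i.e.\ a line in $E^u \oplus E^s$ transverse to the flow direction. Positive $u$ corresponds to the ``unstable-leaning'' directions and negative $u$ to the ``stable-leaning'' ones; a blow-up $u \to +\infty$ followed by re-entry at $-\infty$ corresponds to the Jacobi field vanishing (a conjugate point along that particular field, but not along the geodesic as a whole). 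The first step is to make this dictionary precise and to recall that producing two transverse continuous invariant Lagrangian (here, one-dimensional) sub-line-fields $u^+(x) > 0 > u^-(x)$, with uniform exponential separation of the corresponding Jacobi solutions, yields the hyperbolic splitting $E^u = \mathrm{graph}(u^+)$, $E^s = \mathrm{graph}(u^-)$.

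Next I would construct the candidate unstable field. Fix a geodesic $\gamma$ and its sequence $(t_k)$. Consider the family of solutions $u$ of the Riccati equation that are defined for all $t$ and remain nonnegative (equivalently $\geq 0$) on $(-\infty, t]$; the hypothesis $u(t_k)=0 \Rightarrow u(t_{k+1}) > m$ combined with monotonicity of the Riccati flow in the initial condition (solutions are ordered and, where both are finite, the order is preserved) should force every such solution to satisfy $u(t) \geq m$ for $t$ to the right of the first $t_k$ it passes, and in fact to be bounded below by a fixed positive constant on each interval $[t_k + c, t_{k+1}]$ by a compactness argument using $c \le t_{k+1}-t_k \le C$ and the compactness of $M$ (hence of the curvature). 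Taking the infimum over all admissible past behaviors — equivalently, pushing forward the initial condition $u = +\infty$ (the vanishing Jacobi field) from times $t_{-N}$ and letting $N \to \infty$ — produces a well-defined limiting solution $u^+(t) \geq m$ along $\gamma$; this is the Green/unstable solution, and one checks it depends measurably, then (by the uniform estimates) continuously, on the base point. The stable field $u^-$ is obtained symmetrically by reversing time, giving $u^- \leq -m$, so the two fields are uniformly transverse.

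The expansion estimate is then bookkeeping: along $E^u = \mathrm{graph}(u^+)$, a Jacobi field satisfies $J'/J = u^+ \geq 0$, and on each block $[t_k, t_{k+1}]$ the growth $\log J(t_{k+1}) - \log J(t_k) = \int_{t_k}^{t_{k+1}} u^+ \, dt$ is bounded below by a positive constant (we get a definite contribution on $[t_k+c', t_{k+1}]$ where $u^+$ is bounded below, using $t_{k+1}-t_k \ge c$, while on the short remaining piece $u^+ \ge 0$ only loses a bounded multiplicative factor since $|u^+|$ is a priori bounded by the curvature bound once it is positive). Since the block lengths are bounded above by $C$, this uniform per-block growth converts to a genuine exponential rate $\lambda^t$ with $\lambda \in (0,1)$ for $D\phi^{-t}|_{E^u}$, uniformly in $x$; the stable estimate follows by the time-reversed argument. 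The derivative of the flow on $E^u \oplus E^s$ in Riccati-adapted coordinates is (up to the norm distortion caused by the choice of frame, which is controlled on the compact $T^1M$) essentially multiplication by these Jacobi factors, so Definition~\ref{defHyperbolicBilliard} is verified (with no walls, so $\tilde\Omega = \Omega = T^1M$), i.e.\ the flow is Anosov.

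The main obstacle is the existence and regularity of the limit solutions $u^\pm$: one must rule out that the forward Riccati solution blows up before reaching the next $t_k$ (the hypothesis only controls solutions with the specific initial condition $u(t_k)=0$, so a monotonicity/comparison argument is needed to sandwich all the relevant solutions between $0$ and a solution that stays finite), and one must upgrade the obvious measurable limit to a continuous invariant field — the uniform lower bound $m$ and upper curvature bound are exactly what make the limiting procedure converge uniformly on compact pieces of geodesics and hence give continuity. Everything else (the cone-field-implies-Anosov step, and turning per-block estimates into exponential ones) is standard once these two points are secured.
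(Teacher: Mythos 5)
Your overall architecture --- Riccati coordinates $u=\dot y/y$ on the space of perpendicular Jacobi fields, an invariant cone field, and limit solutions $u^\pm$ obtained by pushing $+\infty$ forward from $-\infty$ (resp.\ backward) --- is the same strategy as the paper's, and your construction of $u^\pm$ by comparison/monotonicity is sound: any solution with $u(t_k)\geq 0$ is sandwiched above the solution starting at $0$ and below the constant-curvature supersolution, hence is defined on the block and satisfies $u(t_{k+1})\geq m$, so $u^+(t_k)\geq m$ for all $k$. The genuine gap is in the expansion estimate. You claim that $\log\abs{J(t_{k+1})}-\log\abs{J(t_k)}=\int_{t_k}^{t_{k+1}}u^+\,dt$ is bounded below by a positive constant, on the grounds that $u^+\geq 0$ throughout and is bounded below by a positive constant on most of each block. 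Neither assertion follows from the hypotheses: the assumption controls the Riccati solution only at the discrete times $t_k$, and between consecutive $t_k$'s the solution can dip well below $0$ --- on a region of positive curvature $\dot u=-K-u^2<0$, so $u^+$ decreases from $m$, may become quite negative, and is only pushed back above $m$ by negative curvature near $t_{k+1}$. Consequently $\int_{t_k}^{t_{k+1}}u^+\,dt$ can be negative, and $\abs{J}$ can genuinely contract over a single block even along the unstable direction; there is no uniform per-block growth of the Euclidean norm, so the ``bookkeeping'' step as written fails.

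What is true, and what the paper proves (Theorem~\ref{thmWoj}, following Wojtkowski), is that the adapted quantity $N(y,\dot y)=\sqrt{y\dot y}$ grows by a definite factor at least $(1-\epsilon^2)^{-1}$ on each block: the block maps $A_k$ have determinant $\pm 1$ and send the open cone $\setof{(y,\dot y)}{y\dot y>0}$ strictly into the closed cone $\setof{(y,\dot y)}{\epsilon\leq \dot y/y\leq 1/\epsilon}$, with the lower bound $\epsilon$ supplied by the hypothesis $u(t_{k+1})>m$ and the upper bound by a Riccati comparison with constant curvature. Since $N$ is equivalent to the Euclidean norm on the smaller cone, one gets $\norm{A_{k+i-1}\cdots A_k v}\geq a\lambda^i\norm{v}$ with a prefactor $a$ that may be small --- i.e.\ only eventual, not per-block, expansion --- which is exactly what Definition~\ref{defHyperbolicBilliard} requires. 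Your proof needs this adapted quadratic form (or an equivalent adapted norm) in place of the naive $\log\abs{J}$ estimate; the remaining steps you outline (nested cones giving the splitting, and interpolation from discrete to continuous time via compactness of $T^1M$ and boundedness of $\norm{D\phi_t}$ for $t\in[0,2C]$) match the paper and are fine.
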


Notice that Theorem~\ref{conditionConeRiccati} is immediately deduced from Theorem~\ref{improvedConditionConeRiccati} by choosing a constant step $t_{k+1} - t_k$. Theorem~\ref{improvedConditionConeRiccati} is used in~\cite{kourganoff2016embedded} to obtain a surface of genus $12$ embedded in $\mathbb S^3$ with Anosov geodesic flow.

\subsection{The case of billiards} Now we consider the general case, in which $D$ may have a boundary. 

For billiards, we consider a generalized version of the Riccati equation. We say that $u$ is a solution of this equation if:
\begin{enumerate}
\item in the interval between two collisions, $\dot u(t) = -K(t) - u(t)^2$ ;
\item when the particle bounces against the boundary at a time $t$, $u$ undergoes a discontinuity: we have $u(t^+) = u(t^-) - \frac{2 \kappa}{\sin \theta}$, where $\kappa$ is the geodesic curvature of the boundary of $D$, and $\theta$ is the angle of incidence\footnote{The notation $u(t^+)$ stands for $\lim_{h \to 0, h > 0} u(t + h)$, and likewise $u(t^-) = \lim_{h \to 0, h < 0} u(t + h)$. In particular, if $u$ is continuous at $t$, then $u(t^+) = u(t^-) = u(t)$.}.
\end{enumerate}

We are now ready to state the main result of this paper:

\begin{thm} \label{thmRiccatiBilliard}
Consider a (not necessarily flat) billiard $D$. Assume that there exist positive constants $A, m, c$ and $C$ such that for any trajectory $\gamma$ with $\gamma(0) \in \tilde\Omega$, there exists an increasing sequence of times $(t_k)_{k \in \mathbb Z} \in \mathbb R^{\mathbb Z}$ satisfying $c \leq t_{k+1} - t_k \leq C$, such that for any $k \in \mathbb Z$, the solution $u$ of the Riccati equation with initial condition $u(t_k^+) = 0$ satisfies $u(t^+) \geq -A$ for all $t \in [t_k, t_{k+1}]$, and $u(t_{k+1}^+) > m$. Also assume that for each $k \in \mathbb Z$, there is no collision in the interval $(t_k - c, t_k)$, and at most one collision in the interval $(t_k, t_{k+1}]$. Then the billiard flow on $D$ is uniformly hyperbolic.
\end{thm}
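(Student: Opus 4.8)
The plan is to reduce uniform hyperbolicity of $\phi_t$ to that of a two-dimensional area-preserving linear cocycle over the flow, to read an invariant splitting of this cocycle off the generalized Riccati hypothesis, and finally to upgrade the pointwise splitting to uniform exponential estimates. Since $\dim M = 2$, the bundle $N = T\tilde\Omega / E^0$ has rank $2$ and $D\phi^t$ descends to isomorphisms $N_x \to N_{\phi^t x}$; identifying $N_x$ with the space of normal Jacobi fields $t \mapsto J(t)$ along the orbit of $x$ via the data $(J(0),J'(0))$, the cocycle is the flow of $J'' + KJ = 0$ between two collisions and the linear shear $(J,J') \mapsto (J, J' - \frac{2\kappa}{\sin\theta}J)$ at a collision of angle $\theta$ and geodesic curvature $\kappa$. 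Thus $J$ is continuous along the orbit and $u := J'/J$ satisfies exactly the generalized Riccati equation of the statement, the jump being the effect of the shear. All the maps involved have determinant $1$, so $D\phi^t|_N$ preserves a natural area form; and since $E^0$ is neutral, $\phi_t$ is uniformly hyperbolic in the sense of Definition~\ref{defHyperbolicBilliard} if and only if this cocycle admits a continuous $D\phi$-invariant splitting $N_x = E^u_x \oplus E^s_x$ with $\lVert D\phi^{-t}|_{E^u}\rVert \le a\lambda^t$ and $\lVert D\phi^{t}|_{E^s}\rVert \le a\lambda^t$; this is the billiard version of the framework of~\cite{MR2229799}, and also the point of view underlying Theorem~\refImprovedConditionConeRiccati (the case without walls).

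The next step uses that the generalized Riccati flow on the circle of slopes $\mathbb R \cup \{\infty\}$ is order-preserving: between collisions this is the comparison principle for $u' = -K - u^2$, and a collision only translates $u$ by a constant. Fix $x \in \tilde\Omega$ with its sequence $(t_k)$ and let $u_k$ be the solution with $u_k(t_k^+) = 0$. From $u_k(t_{k+1}^+) > m > 0 = u_{k+1}(t_{k+1}^+)$ and monotonicity one gets $u_k \ge u_{k+1}$ on $[t_{k+1},\infty)$, hence inductively $u_k \ge u_{k+j}$ on $[t_{k+j},\infty)$ for all $j \ge 0$. Consequently $u_k$ stays $\ge -A$ on all of $[t_k,\infty)$ and satisfies $u_k(t_{k+j}^+) > m$ for every $j \ge 1$; in particular $u_k$ cannot escape to $-\infty$, so it is globally defined in forward time.

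I would then construct the splitting by the classical limiting procedure. For $j \ge 1$, let $J^{(-j)}$ be the normal Jacobi field along the orbit vanishing at $t_{-j}$, normalized at $x$; its slope at $x$ dominates $u_{-j}(0)$, the family is monotone, hence it converges to some slope $u^u(x) \in [-A,\infty)$, and I set $E^u_x$ to be the corresponding line. Symmetrically, Jacobi fields vanishing at $t_{j}$ ($j \to \infty$) produce a line $E^s_x$ with slope $u^s(x)$; here one uses backward comparison, which gives $u^s \le u_k$ on $[t_k,\infty)$. Both line fields are $D\phi$-invariant by uniqueness of the limits and continuous on $\tilde\Omega$ by uniform convergence. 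They are transverse: by the two inequalities above, $u^u(t_k^+) \ge m > 0 \ge u^s(t_k^+)$ at every reset time, and since the difference of two finite Riccati solutions is either identically zero or nowhere zero, $u^u > u^s$ everywhere. This gives the decomposition $T_x\Omega = E^0_x \oplus E^u_x \oplus E^s_x$.

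It remains to make the rates uniform, and this is the step I expect to be the main obstacle. For $E^u$: with $J$ directing $E^u_x$ and normalized by $|J(t_k)| = 1$ one has $\log|J(t_{k+N})| = \int_{t_k}^{t_{k+N}} u^u\,dt$ ($J$ being continuous at collisions). Using that each window $[t_j,t_{j+1}]$ has length in $[c,C]$, that $u^u \ge -A$ throughout while $u^u(t_j^+) \ge m$ at every reset, that each window contains at most one collision and is preceded by a collision-free interval of length $c$, and that $K$ is bounded, a quantitative Riccati estimate shows that this integral grows at least linearly, $\log|J(t_{k+N})| \ge \delta N$ for a uniform $\delta > 0$ and $N$ large; this is where all the quantitative hypotheses are needed simultaneously, and where one must be careful because the collision jump $\frac{2\kappa}{\sin\theta}$ is not bounded a priori near grazing incidence. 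Interpolating within windows by bounded distortion yields $\lVert D\phi^{-t}|_{E^u_x}\rVert \le a\lambda^t$ uniformly in $x$. For $E^s$ no separate analysis is needed: $D\phi^t|_N$ preserves area and $\angle(E^u,E^s)$ is bounded below at the reset times (the slope gap there being $\ge m$), so $\lVert D\phi^{t_j}|_{E^s_x}\rVert$ is comparable to $1/\lVert D\phi^{t_j}|_{E^u_x}\rVert$ and decays exponentially, hence $\lVert D\phi^{t}|_{E^s_x}\rVert \le a\lambda^t$ by bounded distortion again. Together with the splitting this is Definition~\ref{defHyperbolicBilliard}; the overall architecture mirrors the proof of Theorem~\refImprovedConditionConeRiccati, the new work being exactly the uniform control of these estimates across the collision discontinuities.
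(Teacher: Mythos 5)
Your architecture differs from the paper's: you first build $E^u$ and $E^s$ by the classical Green--Eberlein limiting procedure (Jacobi fields vanishing at $t_{\mp j}$) and then try to prove expansion along $E^u$ separately, whereas the paper runs a cone-field argument (Proposition~\ref{propCones} together with Theorem~\ref{thmWoj}, Wojtkowski's quadratic form $\sqrt{xy}$ on the cone) that produces the splitting and the uniform rates \emph{simultaneously}. Your first three paragraphs (reduction to the normal cocycle, monotonicity of the Riccati flow, construction and transversality of the two line fields) are essentially sound. The problem is that the step you yourself flag as ``the main obstacle'' is precisely where the proof lives, and the route you sketch for it does not work.

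Concretely, you reduce expansion along $E^u$ to the claim that $\log|J(t_{k+N})|=\int_{t_k}^{t_{k+N}}u^u\,dt$ grows linearly, to be obtained from a per-window ``quantitative Riccati estimate.'' But the hypotheses do not force $\int_{t_j}^{t_{j+1}}u^u\,dt$ to be bounded below by a positive constant, nor even to be positive: in a region of positive curvature the Riccati flow drives $u^u$ negative (e.g.\ $K\equiv 1$, window of length $1$, $u^u(t_j^+)=m$ small gives $\int_{t_j}^{t_{j+1}}u^u\,dt\approx\log\cos 1<0$), and the hypothesis $u(t_{j+1}^+)>m$ can be restored by a single strongly dispersing collision at the end of the window, which increases $u$ (hence $J'$) without increasing $J$ at all. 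So $|J|$ may genuinely contract over a window; what expands uniformly is the full vector $(J,J')$, equivalently the product $J\,J'$, and recovering growth of $|J|$ along the subsequence $(t_k)$ requires in addition a \emph{uniform upper bound} on the slope $u^u$ at those times (the content of Lemma~\ref{lemmaSupCone}, which uses the collision-free interval $(t_k-c,t_k)$) so that $\|(J,J')\|\asymp|J|$ there. None of this is in your proposal: the ``quantitative Riccati estimate'' is exactly the Wojtkowski-type inequality $N(A_kv)\ge(1-\epsilon^2)^{-1}N(v)$ of Theorem~\ref{thmWoj}, and it cannot be replaced by a windowwise lower bound on $\int u^u$. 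A second, smaller omission: you never deal with a collision occurring at a reset time $t_k$ itself (allowed by the hypotheses, and making the post-collision slope, hence the cone aperture, unbounded); the paper handles this by shifting to the times $\tilde t_k$ and by Lemmas~\ref{lemmaUniformContinuity}--\ref{lemmaTildes}. As written, the proposal therefore has a genuine gap at the uniform-expansion step, and the deferred estimate, in the form stated, is false.
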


Notice that in the particular case where $D$ has no boundary, Theorem~\ref{thmRiccatiBilliard} becomes exactly Theorem~\ref{improvedConditionConeRiccati}. Thus we only need to prove Theorem~\ref{thmRiccatiBilliard}, which will be done in Section~\ref{sectProof}.

\subsection{Applications}

We will explain how Theorem~\ref{thmRiccatiBilliard} can be applied to obtain immediately two famous results: Theorems~\ref{thmSurfaceAnosov} and~\ref{thmBilliardsHyp}. Theorem~\ref{thmRiccatiBilliard} unifies these two theorems, which are both well-known independently. See~\cite{kourganoff2016embedded} for a completely new application of Theorem~\ref{thmRiccatiBilliard}.

\begin{thm} \label{thmSurfaceAnosov}
Let $M$ be a closed Riemannian surface with nonpositive curvature. Assume that every geodesic in $M$ contains a point where the curvature is negative. Then, the geodesic flow on $M$ is Anosov.
\end{thm}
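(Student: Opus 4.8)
The plan is to deduce the theorem from Theorem~\ref{improvedConditionConeRiccati} by taking the arithmetic progression $t_k = kT$ for a suitable $T > 0$. The first observation is that nonpositive curvature makes most of the hypotheses of that criterion free: if $u$ solves the Riccati equation $u' = -K - u^2$ with $u(t_k) = 0$, then $K \le 0$ gives $u' \ge -u^2$, so comparison with the solution $v \equiv 0$ yields $u(t) \ge 0$ for all $t \ge t_k$; and $u' \le \norm{K}_\infty - u^2$ forbids blow-up, so $u$ is defined on all of $[t_k, \infty)$ and stays in $[0, \sqrt{\norm{K}_\infty}]$. Hence the only thing left to check is that $u(t_{k+1})$ is bounded below by some $m > 0$ independent of the geodesic and of $k$. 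For that it suffices to produce constants $T, \delta, \varepsilon_1 > 0$ such that \emph{every geodesic arc of length $T$ contains a subinterval of length $\delta$ on which $K \le -\varepsilon_1$}.

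\textbf{A uniform negative-curvature estimate.} This is where the hypothesis enters, through a compactness argument on $T^1 M$. The assumption is equivalent to saying that the geodesic flow $\phi_t$ has no orbit contained in the closed set $Z = \{\,v \in T^1 M : K(\pi(v)) = 0\,\}$. For $n \ge 1$ set $F_n = \{\,v \in T^1 M : K(\gamma_v(t)) \ge -1/n \text{ for all } t \in [0,n]\,\}$; each $F_n$ is closed and $F_{n+1} \subseteq F_n$. If all $F_n$ were nonempty, compactness would give $v \in \bigcap_n F_n$, whence $K(\gamma_v(t)) = 0$ for every $t \ge 0$; but then the $\omega$-limit set $\omega(v)$ is a nonempty closed $\phi_t$-invariant subset of $Z$, and any $w \in \omega(v)$ generates a complete geodesic lying in $\{K = 0\}$, contradicting the hypothesis. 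So some $F_N = \emptyset$: for every $v$ there is $t^* \in [0,N]$ with $K(\gamma_v(t^*)) < -1/N$. Since $t \mapsto K(\gamma_v(t))$ is Lipschitz with constant $L := \sup_M \abs{\nabla K}$ uniformly in $v$, we get $K(\gamma_v(t)) \le -1/(2N)$ on $[t^*, t^* + \tfrac{1}{2NL}] \subseteq [0, N + \tfrac{1}{2NL}]$; using the group property of $\phi_t$ to translate, every geodesic arc of length $T := N + \tfrac{1}{2NL}$ contains a subinterval of length $\delta := \tfrac{1}{2NL}$ on which $K \le -\varepsilon_1 := -\tfrac{1}{2N}$.

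\textbf{Conclusion via Riccati comparison.} Fix a geodesic $\gamma$, put $t_k = kT$, and let $u$ solve the Riccati equation with $u(t_k) = 0$; let $J = [a, a+\delta] \subseteq [t_k, t_{k+1}]$ be the good subinterval above. On $[t_k,a]$ we have $u \ge 0$ as noted. On $J$, $u' \ge \varepsilon_1 - u^2$ with $u(a) \ge 0$, so comparison with $v' = \varepsilon_1 - v^2$, $v(a) = 0$, gives $u(a+\delta) \ge \sqrt{\varepsilon_1}\tanh(\sqrt{\varepsilon_1}\,\delta) =: m_1 > 0$. On $[a+\delta, t_{k+1}]$, $u' \ge -u^2$ with $u(a+\delta) \ge m_1$ gives $u(t_{k+1}) \ge \frac{m_1}{1 + m_1(t_{k+1}-a-\delta)} \ge \frac{m_1}{1 + m_1 T} =: m > 0$. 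Since $m$ depends only on $\varepsilon_1, \delta, T$, Theorem~\ref{improvedConditionConeRiccati} applies with $c = C = T$ and this $m$, so the geodesic flow is Anosov.

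\textbf{Main obstacle.} The ODE comparisons are routine; the real content is the second step. The subtlety is that the hypothesis only gives \emph{one} point of negative curvature per bi-infinite geodesic, which is a priori not enough for a uniform estimate — one must first rule out geodesics that are eventually flat (in the past or the future), and the $\omega$-limit (and $\alpha$-limit) set argument is precisely what upgrades "a geodesic ray stays in $\{K=0\}$" to "a complete geodesic stays in $\{K=0\}$", after which compactness of $T^1 M$ delivers the uniform interval $[\,$length $\delta$, $K \le -\varepsilon_1\,]$.
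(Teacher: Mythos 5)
Your proof is correct, and the overall architecture (a compactness argument yielding a uniform negative-curvature quantity, then Riccati comparison, then the cone/Riccati criterion) matches the paper's; but your key uniformity lemma and its proof are genuinely different. The paper's Lemma \ref{lemmeSurfaceAnosov} produces $m, t_0 > 0$ with $\int_0^{t_0} K(\gamma(t))\,dt \leq -m$ for \emph{every} geodesic arc of length $t_0$, proved by contradiction via Arzel\`a--Ascoli and a diagonal argument: a sequence of arcs on $[-n,n]$ with $\int_{-n}^{n} K \geq -1/n$ converges to a complete geodesic along which $K \equiv 0$ (using $K \leq 0$), contradicting the hypothesis. Because the intervals $[-n,n]$ are two-sided from the start, the limit is automatically a complete geodesic, so the paper never needs your $\omega$-limit step; that step is the price you pay for working with the one-sided sets $F_n$, and it is exactly the right fix (you could equally have taken $t \in [-n,n]$ in the definition of $F_n$ and skipped it). Your pointwise formulation --- a subinterval of definite length $\delta$ on which $K \leq -\varepsilon_1$ --- requires the extra ingredient $L = \sup_M \abs{\nabla K}$ (harmless on a closed surface, though you should note the degenerate case $L=0$, where $K$ is a negative constant and the result is classical), but it buys you a clean explicit $\tanh$ comparison; the paper's integral formulation is more economical because $-\int_0^t K$ is precisely the term appearing in the integrated Riccati equation $u(t) = -\int_0^t K - \int_0^t u^2$, so it plugs in directly with no gradient bound. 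One cosmetic point: Theorem \ref{improvedConditionConeRiccati} as stated requires $C > c > 0$ strictly, so either take $c = T/2$, $C = 2T$, or simply invoke Theorem \ref{conditionConeRiccati} (the constant-step case), as the paper does.
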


Theorem~\ref{thmSurfaceAnosov} may also be obtained directly, without using Theorem~\ref{conditionConeRiccati}, from Proposition 3.10 of~\cite{MR0380891}. Hunt and MacKay~\cite{MR1986308} used this result to exhibit the first Anosov physical system.

For billiards, we will prove the following counterpart of Theorem~\ref{thmSurfaceAnosov}, which is essentially due to Sinai~\cite{MR0274721}:

\begin{thm} \label{thmBilliardsHyp}
If $D$ is a smooth dispersing flat billiard in $\mathbb T^2$ with \emph{finite horizon}, then the billiard flow is uniformly hyperbolic in~$\tilde \Omega$.
\end{thm}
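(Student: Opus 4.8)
The plan is to deduce the statement directly from Theorem~\ref{thmRiccatiBilliard}. The approach is: first establish a pair of uniform bounds on the free path; then, along each trajectory, choose the sequence $(t_k)$ to sit just before the successive collision times and check the combinatorial conditions of the theorem; finally, integrate the (curvature-free) Riccati equation explicitly and read off the required constants $A,m,c,C$. For the geometric input, note that since $D\subset\mathbb{T}^2$ is a smooth billiard, $\partial D$ is a finite union of disjoint smooth closed curves, and dispersivity means each of them bounds a convex scatterer; by compactness the geodesic curvature $\kappa$ of the walls satisfies $\kappa\le-\kappa_0$ for some $\kappa_0>0$. I claim there are $0<\tau_{\min}\le\tau_{\max}<\infty$ such that, along any trajectory in $\tilde\Omega$, the time between two consecutive collisions lies in $[\tau_{\min},\tau_{\max}]$. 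The upper bound is the finite-horizon hypothesis made quantitative: arbitrarily long collision-free segments would, by a compactness argument in the compact phase space, produce a bi-infinite collision-free trajectory, contradicting finite horizon. The lower bound is where dispersivity enters: after reflecting off a convex scatterer the outgoing ray immediately leaves a neighbourhood of that scatterer, so consecutive collisions occur on geometrically distinct walls (in the universal cover, on translates at positive distance), and the free time is bounded below.

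Granting this, I would fix a trajectory $\gamma$ with $\gamma(0)\in\tilde\Omega$ and let $\cdots<s_{-1}<s_0<s_1<\cdots$ be its collision times (defined for all $k\in\mathbb{Z}$: finite horizon forces infinitely many collisions in both time directions, and $\gamma(0)\in\tilde\Omega$ ensures none is grazing), with $s_{k+1}-s_k\in[\tau_{\min},\tau_{\max}]$. Set $\delta:=\tau_{\min}/3$, $c:=\tau_{\min}/3$, $C:=\tau_{\max}$, and $t_k:=s_k-\delta$. Then $t_{k+1}-t_k=s_{k+1}-s_k\in[c,C]$; since $s_{k-1}\le s_k-\tau_{\min}<t_k-c$, there is no collision in $(t_k-c,t_k)$; and among the $s_j$ only $s_k$ lies in $(t_k,t_{k+1}]=(s_k-\delta,\,s_{k+1}-\delta]$ (because $s_k-\delta<s_k\le s_{k+1}-\delta$, while $s_{k-1}<t_k$ and $s_{k+1}>t_{k+1}$, and more distant collisions a fortiori). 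So the combinatorial conditions of Theorem~\ref{thmRiccatiBilliard} hold.

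Next I would compute the Riccati solution $u$ with $u(t_k^+)=0$ on $[t_k,t_{k+1}]$. Flatness gives $K\equiv0$, so between collisions $\dot u=-u^2$ and hence $u\equiv0$ on $[t_k,s_k]$. At the unique collision $s_k$ the jump rule yields
\[ u(s_k^+)\;=\;u(s_k^-)-\frac{2\kappa}{\sin\theta}\;=\;-\frac{2\kappa}{\sin\theta}\;\ge\;2\kappa_0\;>\;0, \]
using $\kappa\le-\kappa_0$ and $0<\sin\theta\le1$. On $[s_k,t_{k+1}]$ the equation $\dot u=-u^2$ with this positive datum integrates to $u(t)=\frac{u(s_k^+)}{1+u(s_k^+)(t-s_k)}$, a positive decreasing function defined on all of $[s_k,t_{k+1}]$; since $x\mapsto x/(1+x\Delta)$ is increasing in $x\ge0$ for fixed $\Delta\ge0$ and $t_{k+1}-s_k\le\tau_{\max}$, this gives
\[ u(t_{k+1}^+)\;\ge\;\frac{2\kappa_0}{1+2\kappa_0\,\tau_{\max}}\;=:\;m\;>\;0 . \]
Moreover $u\ge0$ throughout $[t_k,t_{k+1}]$, so $u(t^+)\ge-A$ holds with any $A>0$, say $A:=1$. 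Theorem~\ref{thmRiccatiBilliard} then applies with the uniform constants $A,m,c,C$, and the billiard flow on $D$ is uniformly hyperbolic on $\tilde\Omega$, recovering Sinai's theorem.

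The main obstacle is the pair of free-path bounds $0<\tau_{\min}\le\tau_{\max}<\infty$; once these are in hand, the choice $t_k=s_k-\tau_{\min}/3$ and the explicit integration above do everything else. Between the two, I expect the lower bound $\tau_{\min}>0$ — the statement that two collisions cannot occur in arbitrarily short time, which crucially uses convexity of the scatterers — to need the most care to phrase cleanly on the torus, whereas the upper bound is essentially a reformulation of the finite-horizon assumption.
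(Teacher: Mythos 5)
Your proposal is correct and follows essentially the same route as the paper: deduce the result from Theorem~\ref{thmRiccatiBilliard} by taking the $t_k$ tied to the collision times, using finite horizon (via a compactness argument) for the upper bound $C$ and convexity of the scatterers for the lower bound $c$, and computing the Riccati solution explicitly ($u\equiv 0$ between collisions, jumping to $-2\kappa/\sin\theta>0$ at each bounce). The only differences are cosmetic --- the paper places $t_k$ exactly at the collisions while you shift by $\tau_{\min}/3$, and you are in fact more explicit than the paper about the positive lower bound on the free path, which the paper's verification implicitly needs but does not prove.
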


We say that a billiard has \emph{finite horizon} if every trajectory hits the boundary at least once.

\begin{figure}
	\begin{center}
	\begin{tabular}{cc}
		\begin{tikzpicture}[scale=0.8]
    		\path[fill=black!20] (-2,-2) rectangle (2,2);
    		\path[draw=black, thick, fill=white] (0,0) circle (1);
		\end{tikzpicture}
		&		
		\begin{tikzpicture}[scale=0.8]
    		\clip (-2,-2) rectangle (2,2);
    		\path[fill=black!20] (-1.99,-1.99) rectangle (2,2);
    		\path[draw=black, thick, fill=white] (0,0) circle (0.5);
    		\path[draw=black, thick, fill=white] (2,2) circle (1.8);
    		\path[draw=black, thick, fill=white] (-2,-2) circle (1.8);
    		\path[draw=black, thick, fill=white] (-2,2) circle (1.8);
    		\path[draw=black, thick, fill=white] (2,-2) circle (1.8);
		\end{tikzpicture}
	\end{tabular}
		\caption{On the left, a dispersing billiard in $\mathbb T^2$ with infinite horizon. On the right, a dispersing billiard in $\mathbb T^2$ with finite horizon.}
	\end{center}
\end{figure}
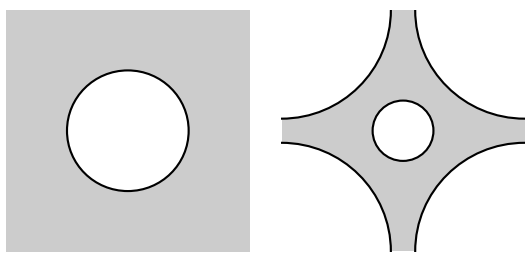

\subsection{Consequences of uniform hyperbolicity} It is shown in~\cite{MR0295390} that (smooth) volume-preserving Anosov flows are ergodic: every invariant subset has either zero or full measure. It was shown later (see~\cite{MR1626749} and~\cite{MR0377980}) that Anosov geodesic flows on surfaces are even exponentially mixing (and then, in all higher dimensions~\cite{MR2113022}).

As for billiard flows, in the flat case only, Sinaï proved ergodicity for smooth dispersing billiards with finite horizon in~\cite{MR0274721}. It was shown in~\cite{baladi2015exponential} that such flows are exponentially mixing.

The consequences of uniform hyperbolicity in the nonflat case are still unknown.

\subsection{Structure of the paper} In Section~\ref{sectCone}, we prove a \emph{cone criterion}, following the ideas of Wojtkowski~\cite{MR782793}. In Section~\ref{sectJacobi}, we study Jacobi fields in (not necessarily flat) billiards. The tools which are introduced in Sections~\ref{sectCone} and~\ref{sectJacobi} are used in Section~\ref{sectProof} to prove Theorem~\ref{thmRiccatiBilliard}. Finally, the two applications are given in Section~\ref{sectAppl}.

\section{The cone criterion} \label{sectCone}
\begin{definition}
Consider a Euclidean space $E$.

A \emph{cone}\footnote{The word ``cone'' has several different meanings in mathematics: here we take the same definition as~\cite{MR1326374}.} in $E$ is a set $C$ such that there exist a decomposition $E = F \oplus G$ and a real number $\alpha \geq 0$ such that

\[ C = \setof{(x, y) \in F \oplus G}{\norm{x} \leq \alpha \norm{y}}. \]

The number $\arctan \alpha$ is called the \emph{angle} of the cone.

Two cones $C_1, C_2$ are said to be \emph{supplementary} if they correspond to decompositions $E = F_1 \oplus G_1$ and $E = F_2 \oplus G_2$ such that $F_1 = G_2$ and $F_2 = G_1$.
\end{definition}

\begin{prop} \label{propCones}
Consider a sequence of invertible linear mappings $A_k: \mathbb R^n \to \mathbb R^n$, $k \in \mathbb Z$, and a sequence of supplementary cones $C_k$ and $D_k$, corresponding to the decomposition $\mathbb R^n = \mathbb R^m \times \mathbb R^{n-m}$.
Assume that there exist $a > 0$, $\lambda > 1$ such that for all $k \in \mathbb Z$:
\begin{enumerate}
\item $A_k (C_k) \subseteq C_{k+1}$ (invariance in the future),
\item \label{expansionFuture} $\norm{A_{k-1} \circ \ldots \circ A_{k-i} (v)} \geq a \lambda^i \norm{v}$ for all $i \geq 0$ and $v \in C_{k-i}$ (expansion in the future),
\item $A_k^{-1} (D_{k+1}) \subseteq D_k$ (invariance in the past),
\item \label{expansionPast} $\norm{A_{k}^{-1} \circ \ldots \circ A_{k+i-1}^{-1} (v)} \geq a \lambda^i \norm{v}$ for all $i \geq 0$ and $v \in D_{k+i}$ (expansion in the past).
\end{enumerate}

Then \[ E_k^u = \bigcap_{i=0}^{+\infty} A_{k-1} \circ \ldots \circ A_{k-i} (C_{k-i}) \] is an $m$-dimensional subspace contained in $C_k$, and \[ E_k^s = \bigcap_{i=0}^{+\infty} A_{k}^{-1} \circ \ldots \circ A_{k+i-1}^{-1} (D_{k+i}) \] is an $(n-m)$-dimensional subspace contained in $D_k$.
\end{prop}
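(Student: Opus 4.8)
My plan is to realise $E^u_k$ and $E^s_k$ as nested intersections of cones, to extract candidate subspaces by a compactness argument in the Grassmannian, and then to force equality by playing the two expansion hypotheses against each other. Write $L_i=A_{k-1}\circ\cdots\circ A_{k-i}$ (so $L_0=\mathrm{id}$ and $L_i^{-1}=A_{k-i}^{-1}\circ\cdots\circ A_{k-1}^{-1}$), and let $P$, $Q$ be the common cores of the $C_\bullet$ and of the $D_\bullet$; since the cones are supplementary and attached to one fixed decomposition, $P$ is $m$-dimensional, $Q$ is $(n-m)$-dimensional, and $P\oplus Q=\mathbb R^n$. Iterating the future-invariance $A_j(C_j)\subseteq C_{j+1}$ shows that the sets $C^{(i)}_k:=L_i(C_{k-i})$ form a decreasing sequence of closed cones all contained in $C_k$, so $E^u_k\subseteq C_k$; in particular, projecting onto $P$ along $Q$ is injective on any linear subspace of $E^u_k$, so such a subspace has dimension $\le m$. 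Everything below has an exact mirror for $E^s_k$, obtained by reversing time and swapping $C_\bullet\leftrightarrow D_\bullet$ together with the two expansion hypotheses; in particular $E^s_k\subseteq D_k$. I will therefore work with $E^u_k$ and quote the mirror facts about $E^s_k$ when needed.

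Next I would produce a subspace inside $E^u_k$. Since $P$ is the core of $C_{k-i}$, the $m$-dimensional subspace $L_i(P)$ is contained in $C^{(i)}_k$. The Grassmannian of $m$-planes in $\mathbb R^n$ is compact, so some subsequence of $(L_i(P))_i$ converges to an $m$-plane $V_k$; because the $C^{(i)}_k$ are closed and nested, $V_k\subseteq C^{(i)}_k$ for every $i$, i.e. $V_k\subseteq E^u_k$. By the mirror construction, $E^s_k$ contains an $(n-m)$-dimensional subspace $W_k$.

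The main step is then to upgrade this to $E^u_k=V_k$. First, $E^u_k\cap D_k=\{0\}$: if $v$ lies in this intersection, then $v\in E^u_k$ gives $L_i^{-1}v\in C_{k-i}$, so future expansion yields $\|v\|=\|L_i(L_i^{-1}v)\|\ge a\lambda^i\|L_i^{-1}v\|$, hence $\|L_i^{-1}v\|\le a^{-1}\lambda^{-i}\|v\|$; on the other hand $v\in D_k$, so past expansion with base point $k-i$ gives $\|L_i^{-1}v\|\ge a\lambda^i\|v\|$; since $\lambda>1$, letting $i\to\infty$ forces $v=0$. As $E^s_k\subseteq D_k$, this yields $E^u_k\cap E^s_k=\{0\}$, so $V_k\cap W_k=\{0\}$; and since $\dim V_k+\dim W_k=m+(n-m)=n$, we get $\mathbb R^n=V_k\oplus W_k$. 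Now take an arbitrary $v\in E^u_k$ and split $v=v'+v''$ along this sum. Both $v$ and $v'\in V_k\subseteq E^u_k$ lie in $E^u_k$, so $L_i^{-1}v$ and $L_i^{-1}v'$ lie in $C_{k-i}$, and future expansion gives $\|L_i^{-1}v\|\le a^{-1}\lambda^{-i}\|v\|$ and $\|L_i^{-1}v'\|\le a^{-1}\lambda^{-i}\|v'\|$; meanwhile $v''\in W_k\subseteq E^s_k\subseteq D_k$, so past expansion gives $\|L_i^{-1}v''\|\ge a\lambda^i\|v''\|$. Since $L_i^{-1}v''=L_i^{-1}v-L_i^{-1}v'$, we obtain $a\lambda^i\|v''\|\le a^{-1}\lambda^{-i}(\|v\|+\|v'\|)$, and letting $i\to\infty$ forces $v''=0$. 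Hence $E^u_k=V_k$, an $m$-dimensional subspace contained in $C_k$, and the mirror argument gives the statement for $E^s_k$.

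The genuinely non-formal point is this last step. Because the cones here are bowtie-shaped and not convex, $E^u_k$ is a priori only a closed cone, and there is no cheap reason for it to be closed under addition, i.e. to be a \emph{linear} subspace of the stated dimension; indeed the nested intersection of cones could in principle be a proper sub-cone. The device that resolves this is to first manufacture the candidate planes $V_k,W_k$ and then use that a vector of $E^u_k$ has its entire backward orbit trapped in the $C$-cones: pulling it back by $L_i^{-1}$ drives its $V_k$-component into a region that future expansion compresses at rate $\lambda^{-i}$, while past expansion dilates its $W_k$-component at rate $\lambda^{i}$, and these two estimates can only coexist if the $W_k$-component vanishes. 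The only care needed beyond this is a small index shift when moving the base point in the two expansion hypotheses; notably, no quantitative bound on the $A_k$ or on the cone angles is required.
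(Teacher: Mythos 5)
Your proof is correct and follows essentially the same strategy as the paper: extract an $m$-plane $V_k\subseteq E_k^u$ by a compactness argument, then kill any extra vector by pitting the forward-expansion estimate (which contracts backward iterates of vectors of $E_k^u$) against the backward-expansion estimate (which dilates backward iterates of vectors of $D_k$) via the triangle inequality. The only cosmetic difference is that you decompose along $V_k\oplus W_k$ with $W_k\subseteq E_k^s$, where the paper instead picks a combination $v+tw$ lying directly in the coordinate subspace $\{0\}\times\mathbb R^{n-m}\subseteq D_k$; both choices feed the same contradiction.
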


\begin{proof}
For all $i \geq 0$, $A_{k-1} \circ \ldots \circ A_{k-i} (C_{k-i})$ is a cone, which contains a vector space $V_i$ of dimension $m$. Thus, the intersection $E_k^u$ contains a vector space $V$ of dimension $m$ (for example, consider a converging subsequence of orthonormal bases of $V_i$). Assume that there exists $w \in E_k^u \setminus V$. Then there exists $v \in V$ and $t \in \mathbb R$ such that $v + t w \in \{0\} \times \mathbb R^{n-m}$ (notice also that $tw \in E_k^u$). Since $A_{k-i}^{-1} \circ \ldots \circ A_{k-1}^{-1}(tw)$ and $A_{k-i}^{-1} \circ \ldots \circ A_{k-1}^{-1}(v)$ lie in $E_{k-i}^u$, Assumption~\ref{expansionFuture} gives us:

\[ \norm{A_{k-i}^{-1} \circ \ldots \circ A_{k-1}^{-1}(t w)} \leq \frac{1}{a \lambda^i} \norm{t w} \underset{k \to +\infty}{\to} 0, \]
\[ \norm{A_{k-i}^{-1} \circ \ldots \circ A_{k-1}^{-1}(v)} \leq \frac{1}{a \lambda^i} \norm{v} \underset{k \to +\infty}{\to} 0, \]
but at the same time, since $v + t w \in D_k$, Assumption~\ref{expansionPast} gives:
\[ \norm{A_{k-i}^{-1} \circ \ldots \circ A_{k-1}^{-1}(v + t w)} \geq a \lambda^i \norm{v + t w} \underset{k \to +\infty}{\to} +\infty, \]
which contradicts the triangle inequality.

One obtains the result for $E_k^s$ in the same way.
\end{proof}

\begin{thm} \label{thmWoj}
Let $A_k = \begin{pmatrix} a_k & b_k \\ c_k & d_k \end{pmatrix}$ (with $k \in \mathbb Z$) be a sequence of $2 \times 2$ matrices, with determinant $\pm 1$. Fix $\epsilon > 0$, and consider the cone $C_\epsilon$ of all vectors $\begin{pmatrix}x \\ y\end{pmatrix} \in \mathbb R^2$ such that $\epsilon y \leq x \leq \frac{1}{\epsilon} y$. Assume that for all $k$, and all $v = \begin{pmatrix} x \\ y \end{pmatrix}$ with $xy > 0$, \[ A_k v \in C_\epsilon. \]

Then, there exist $a > 0$ and $\lambda > 1$ such that for all $k \in \mathbb Z$, for all $i \geq 0$ and $v \in C_\epsilon$, \[ \norm{A_{k-1} \circ \ldots \circ A_{k-i} (v)} \geq a \lambda^i \norm{v}. \]
\end{thm}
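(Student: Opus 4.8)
The plan is to quantify expansion by means of the indefinite quadratic form $\mathcal{Q}(x,y) = xy$. We may assume $0 < \epsilon < 1$, the hypothesis being empty otherwise, so that $C_\epsilon \setminus \{0\}$ is exactly the open sector $\{x > 0,\ y > 0,\ \epsilon \le x/y \le 1/\epsilon\}$, a narrow sub-cone of the open quadrant $Q_+ := \{x > 0,\ y > 0\}$. Since $Q_+ \subseteq \{xy > 0\}$, the hypothesis yields $A_k(Q_+) \subseteq C_\epsilon$ for every $k$, hence $A_k(\overline{Q_+}) \subseteq C_\epsilon$ by continuity; applying this to the two coordinate half-axes shows that each column of $A_k$ lies in $C_\epsilon$, so $A_k$ has strictly positive entries $a_k, b_k, c_k, d_k$, and the slopes $s_k := a_k/c_k$ and $t_k := b_k/d_k$ of its columns both lie in $[\epsilon, 1/\epsilon]$.

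The heart of the argument is a one-step expansion estimate: there is $\sigma = \sigma(\epsilon) > 1$ such that $\mathcal{Q}(A_k v) \geq \sigma^2\,\mathcal{Q}(v)$ for every $k$ and every $v = (p,q)$ with $p, q \geq 0$. I would establish it by expanding $\mathcal{Q}(A_k v) = a_k c_k\, p^2 + (a_k d_k + b_k c_k)\, pq + b_k d_k\, q^2$, using the arithmetic--geometric inequality $a_k c_k\, p^2 + b_k d_k\, q^2 \geq 2\sqrt{a_k b_k c_k d_k}\; pq$, and then substituting $a_k = s_k c_k$, $b_k = t_k d_k$, which bounds the ratio $\mathcal{Q}(A_k v)/\mathcal{Q}(v)$ below by $c_k d_k\,(\sqrt{s_k} + \sqrt{t_k})^2$. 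The unit-determinant relation $1 = |a_k d_k - b_k c_k| = c_k d_k\,|s_k - t_k|$ (invertibility forces $s_k \neq t_k$) then turns this bound into $\frac{(\sqrt{s_k} + \sqrt{t_k})^2}{|s_k - t_k|} = \frac{\sqrt{s_k} + \sqrt{t_k}}{|\sqrt{s_k} - \sqrt{t_k}|} = \frac{1 + r}{1 - r}$ with $r := \sqrt{\min(s_k, t_k)/\max(s_k, t_k)} \in [\epsilon, 1)$, which is at least $\frac{1 + \epsilon}{1 - \epsilon} =: \sigma^2 > 1$. I expect this computation to be the main obstacle, and the key point to notice is that the unit-determinant condition and the narrowness of $C_\epsilon$ must be used in tandem — the former to pin down $c_k d_k$, the latter to keep the column slopes bounded away from $0$ and $\infty$; dropping either one destroys the uniform gap $\sigma > 1$.

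The remaining steps are routine. First I would record the elementary two-sided comparison $\frac{\epsilon}{1 + \epsilon^2}\,\norm{w}^2 \leq \mathcal{Q}(w) \leq \frac12\,\norm{w}^2$ valid for all $w \in C_\epsilon$ (parametrise $w$ by its slope $\tau = x/y \in [\epsilon, 1/\epsilon]$ and observe that $\mathcal{Q}(w)/\norm{w}^2 = \tau/(1 + \tau^2)$ ranges in $[\frac{\epsilon}{1+\epsilon^2}, \frac12]$). Now fix $k \in \mathbb{Z}$, $i \geq 1$, and $v \in C_\epsilon$: since $C_\epsilon \setminus \{0\} \subseteq Q_+$, successively applying $A_{k-i}, A_{k-i+1}, \dots, A_{k-1}$ to $v$ keeps every intermediate vector in $C_\epsilon$, so the one-step estimate applies at each of the $i$ steps and gives $\mathcal{Q}\big(A_{k-1}\circ\cdots\circ A_{k-i}(v)\big) \geq \sigma^{2i}\,\mathcal{Q}(v)$. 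Feeding this into the two comparisons yields $\norm{A_{k-1}\circ\cdots\circ A_{k-i}(v)} \geq \sqrt{\tfrac{2\epsilon}{1+\epsilon^2}}\;\sigma^{i}\,\norm{v}$, which is the claimed bound with $\lambda := \sigma > 1$ and $a := \sqrt{\tfrac{2\epsilon}{1+\epsilon^2}} \leq 1$; the remaining case $i = 0$ (empty composition) is trivial because $a \leq 1$.
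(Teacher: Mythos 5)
Your proof is correct and follows essentially the same route as the paper: both measure expansion with Wojtkowski's quadratic form $\mathcal{Q}(x,y)=xy$ (the paper uses $N=\sqrt{xy}$), compare it to the Euclidean norm on the narrow cone, and derive a uniform one-step expansion factor from the unit-determinant condition combined with the fact that the columns of $A_k$ lie in $C_\epsilon$. The only difference is cosmetic — you retain the square terms via AM--GM to get the factor $\frac{1+\epsilon}{1-\epsilon}$, where the paper discards them and bounds $b_kc_k$ directly to get $\frac{1+\epsilon^2}{1-\epsilon^2}$ — and the paper additionally normalizes signs by multiplying by $-\mathrm{Id}$ and the coordinate swap, a reduction your literal reading of the hypothesis lets you skip.
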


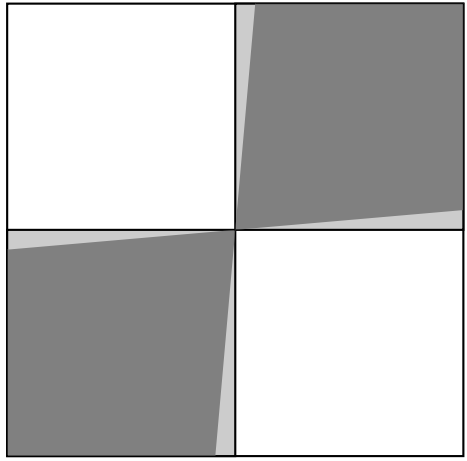
\begin{figure}[!ht]
\begin{center}
\begin{tikzpicture}[auto=right]
\begin{scope}[scale=3]
    \draw[thick] (-1,-1) rectangle (1,1);
    \draw[thick,fill=black!20] (0,0) rectangle (1,1);
    \draw[thick,fill=black!20] (-1,-1) rectangle (0,0);
    \clip (-1,-1) rectangle (1,1);
    \begin{scope}
    \clip (5:2) arc (5:185:2);
    \clip (-95:2) arc (-95:85:2);
    \draw[fill=black!50] (0,0) rectangle (1,1);
    \end{scope}
    \begin{scope}
    \clip (5:-2) arc (5:185:-2);
    \clip (-95:-2) arc (-95:85:-2);
    \draw[fill=black!50] (0,0) rectangle (-1,-1);
    \end{scope}
  \end{scope}
\end{tikzpicture}
\caption{Each $A_k$ maps the cone $xy > 0$ (in grey) into the smaller cone $C_\epsilon$ (in dark grey).}
\end{center}
\end{figure}
\begin{proof}

On the basis of Wojtkowski's idea~\cite{MR782793}, instead of proving expansion directly for the Euclidean norm, we consider the function
\[ \begin{aligned} N: C_\epsilon & \to \mathbb R_{\geq 0}
\\ \begin{pmatrix}x\\y\end{pmatrix} & \mapsto \sqrt{xy}. \end{aligned} \]

Notice that $N$ is equivalent to the Euclidean norm on $C_\epsilon$, \emph{i.e.} there exists $M > 0$ such that for all $v \in C_\epsilon$,
\[ \frac{1}{M} \norm{v} \leq N(v) \leq M \norm{v}, \]
because $\frac{\epsilon}{2} (x^2 + y^2) \leq xy \leq \frac{2}{\epsilon} (x^2 + y^2)$ for all $\begin{pmatrix}x \\y\end{pmatrix} \in C_\epsilon$.

We are going to show that for all $k\in \mathbb Z$ and $v \in C_\epsilon$, $N(A_k v) \geq \frac{1}{1 - \epsilon^2} N(v)$. With the equivalence of norms, this will complete the proof.

Let $k \in \mathbb Z$. We may assume that $\det(A_k) = 1$, by multiplying $A_k$ by $\begin{pmatrix} 0 & 1 \\ 1 & 0 \end{pmatrix}$ on the left. Moreover, we may assume that all the coefficients of $A_k$ are positive, by multiplying $A_k$ by $- \mathrm{Id}$.

Notice that the two vectors $A_k \begin{pmatrix}1 \\ 0 \end{pmatrix} = \begin{pmatrix} a_k \\ c_k \end{pmatrix}$ and $A_k \begin{pmatrix}0 \\ 1 \end{pmatrix} = \begin{pmatrix} b_k \\ d_k \end{pmatrix}$ are in the cone $C_\epsilon$, by continuity of $A_k$.

Then for $v = \begin{pmatrix}x \\ y \end{pmatrix} \in C_\epsilon$:
\[ \begin{aligned} N ( A_k v ) & = (a_k x + b_k y) (c_k x + d_k y)
\\ & \geq (a_k d_k - b_k c_k ) xy + 2 b_k c_k xy
\\ & \geq (1 + 2 b_k c_k) N(v) \end{aligned} \]

But $a_k d_k - b_k c_k = 1$ and $a_k \leq \frac{1}{\epsilon} b_k, d_k \leq \frac{1}{\epsilon} c_k$, so that $b_k c_k \geq \frac{1}{1 - \epsilon^2} - 1$.

Finally, $N(A_k v) \geq \frac{1}{1 - \epsilon^2} N(v)$.
\end{proof}

\section{Jacobi fields} \label{sectJacobi}

\subsection{Jacobi fields for geodesic flows} \label{sectJacobiGeodesic}

The results in this section are standard and will not be proved: see for example~\cite{MR0152974} for details.

Consider a smooth Riemannian surface $(M,g)$. To show that a geodesic flow is hyperbolic, one has to study how the geodesics move away from (or closer to) each other. Thus, one considers small variations of a given geodesic.

\begin{definition}
Consider a geodesic $\gamma: (a, b) \to M$. Consider a geodesic variation of $\gamma$, \emph{i.e.} a smooth function
\[ f(t, s): (a, b) \times (c, d) \to M \]
such that $f(., 0)$ is the geodesic $\gamma$, and for all $s \in (c, d)$, $f(., s)$ is a geodesic.

The vector field $Y = \deriv{f}{s}$ along the curve $\gamma(t)$ is called an \emph{infinitesimal variation of $\gamma$}.
\end{definition}

\begin{prop}
Any infinitesimal variation of $\gamma$ is a solution of the Jacobi equation:
\[ \ddot{Y} = - R(\dot \gamma, Y) \dot \gamma, \]
where $R$ is the Riemann tensor.
The solutions of the Jacobi equation are called \emph{Jacobi fields}.
\end{prop}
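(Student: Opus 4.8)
The plan is to carry out the classical second-variation computation, keeping careful track of the two parameters $t$ and $s$. First I would fix notation: write $\frac{D}{\partial t}$ and $\frac{D}{\partial s}$ for the covariant derivatives of vector fields along the surface $f(t,s)$ in the $t$- and $s$-directions respectively, so that $\dot Y$ means $\frac{D}{\partial t} Y$ and the geodesic hypothesis on each curve $f(\cdot,s)$ reads $\frac{D}{\partial t}\deriv{f}{t} = 0$ for all $s$. By definition $Y = \deriv{f}{s}$ evaluated at $s=0$, and it is convenient to work with the vector field $\deriv{f}{s}$ on the whole variation and restrict to $s=0$ only at the end.

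The computation itself proceeds in three moves. Since the Levi-Civita connection is torsion-free, the symmetry lemma gives $\frac{D}{\partial t}\deriv{f}{s} = \frac{D}{\partial s}\deriv{f}{t}$; applying $\frac{D}{\partial t}$ once more yields
\[ \frac{D^2}{\partial t^2}\deriv{f}{s} = \frac{D}{\partial t}\frac{D}{\partial s}\deriv{f}{t}. \]
Next I would invoke the defining identity of the Riemann tensor in the form of the commutation formula for covariant derivatives along a map of two variables, $\frac{D}{\partial t}\frac{D}{\partial s} Z - \frac{D}{\partial s}\frac{D}{\partial t} Z = R\!\left(\deriv{f}{t},\deriv{f}{s}\right) Z$ (with the sign convention for $R$ fixed so that the final formula comes out as stated), applied to $Z = \deriv{f}{t}$. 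This turns the right-hand side into $\frac{D}{\partial s}\frac{D}{\partial t}\deriv{f}{t}$ plus the curvature term. Finally, the geodesic equation kills the first summand, $\frac{D}{\partial s}\frac{D}{\partial t}\deriv{f}{t} = \frac{D}{\partial s} 0 = 0$, leaving only the curvature term; restricting to $s=0$, where $\deriv{f}{t} = \dot\gamma$ and $\deriv{f}{s} = Y$, gives $\ddot Y = -R(\dot\gamma, Y)\dot\gamma$.

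There is no serious obstacle here: the content is entirely the two standard facts that the connection is torsion-free (symmetry lemma) and the definition of curvature, together with bookkeeping of the order in which $\frac{D}{\partial t}$ and $\frac{D}{\partial s}$ are applied. The only point demanding a little care is the sign: depending on the convention chosen for $R(X,Y)Z$ and for the commutator $[\frac{D}{\partial t},\frac{D}{\partial s}]$, the curvature term appears as $R(\dot\gamma,Y)\dot\gamma$ or its negative, and I would simply state the convention used so that the displayed Jacobi equation is consistent with it. I would also remark that, since every smooth geodesic variation produces such a field and conversely every Jacobi field arises this way, this proposition justifies referring to solutions of the equation as Jacobi fields in what follows.
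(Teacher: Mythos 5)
Your proposal is correct and follows essentially the same route as the paper: the symmetry lemma $\nabla_{\deriv{f}{t}}\deriv{f}{s}=\nabla_{\deriv{f}{s}}\deriv{f}{t}$ (from $[\deriv{f}{t},\deriv{f}{s}]=0$), the curvature commutation identity applied to $\deriv{f}{t}$, and the geodesic equation to kill the remaining term. The only difference is presentational (the paper computes $R(\dot\gamma,Y)\dot\gamma$ directly from the definition of $R$ with the vanishing bracket term, which also settles the sign you left to convention), so there is nothing substantive to add.
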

%
%

\begin{prop}
Every Jacobi field along a geodesic $\gamma$ is an infinitesimal variation of $\gamma$.
\end{prop}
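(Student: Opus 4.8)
The plan is to turn the linear-algebra fact that a Jacobi field is determined by its value and covariant derivative at one point into an explicit construction of a geodesic variation with prescribed initial data. Fix a base point $t_0 \in (a,b)$. Since the Jacobi equation $\ddot Y = - R(\dot\gamma, Y)\dot\gamma$ is a linear second-order ODE along $\gamma$, its solutions form a vector space of dimension $2\dim M$, and the evaluation map $Y \mapsto (Y(t_0), \dot Y(t_0)) \in T_{\gamma(t_0)}M \times T_{\gamma(t_0)}M$ is a linear isomorphism. By the previous proposition, every infinitesimal variation of $\gamma$ is a Jacobi field; hence it suffices to produce, for an arbitrary pair $(v,w) \in T_{\gamma(t_0)}M \times T_{\gamma(t_0)}M$, an infinitesimal variation $J$ with $J(t_0) = v$ and $\dot J(t_0) = w$. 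The given Jacobi field $Y$ will then coincide with the variation built from the data $(Y(t_0), \dot Y(t_0))$, since two Jacobi fields with the same data at $t_0$ are equal.

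To build $J$, I would let the geodesics of the variation emanate from a curve through $\gamma(t_0)$ in the direction $v$, with initial velocities chosen so as to produce the covariant derivative $w$. Concretely, set $\sigma(s) = \exp_{\gamma(t_0)}(s v)$, so $\sigma(0) = \gamma(t_0)$ and $\dot\sigma(0) = v$, and let $V(s) = P_0^s\big(\dot\gamma(t_0) + s\,w\big)$, where $P_0^s : T_{\gamma(t_0)}M \to T_{\sigma(s)}M$ is parallel transport along $\sigma$; then $V(0) = \dot\gamma(t_0)$ and $\frac{DV}{ds}(0) = w$. Now put $f(t,s) = \exp_{\sigma(s)}\big((t-t_0) V(s)\big)$. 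Each $f(\cdot, s)$ is a geodesic by construction, and $f(\cdot,0) = \gamma$ because $V(0) = \dot\gamma(t_0)$, so $f$ is a geodesic variation and $J := \deriv{f}{s}\big|_{s=0}$ is an infinitesimal variation of $\gamma$. Evaluating at $t_0$: since $f(t_0, s) = \sigma(s)$ we get $J(t_0) = \dot\sigma(0) = v$, and using once more the identity $\nabla_{\deriv{f}{t}}\deriv{f}{s} = \nabla_{\deriv{f}{s}}\deriv{f}{t}$ (from the proof of the Jacobi equation), together with $\deriv{f}{t}(t_0, s) = V(s)$, we get $\dot J(t_0) = \nabla_{\deriv{f}{t}}\deriv{f}{s}\big|_{(t_0,0)} = \nabla_{\deriv{f}{s}}\deriv{f}{t}\big|_{(t_0,0)} = \frac{DV}{ds}(0) = w$. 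Hence $J$ has exactly the prescribed data, which finishes the argument.

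The one genuinely delicate point is the domain of $f$. A priori $\exp_{\sigma(s)}\big((t-t_0)V(s)\big)$ is defined only on some open neighbourhood of $(a,b)\times\{0\}$, and on a general incomplete manifold this need not contain a product $(a,b)\times(-\epsilon,\epsilon)$. I would deal with this by fixing an arbitrary compact subinterval $[c,d]\subset(a,b)$ with $t_0 \in [c,d]$: openness of the domain of the geodesic flow then yields $\epsilon > 0$ such that $f$ is smooth on $[c,d]\times(-\epsilon,\epsilon)$, which is all that is ever needed to manipulate Jacobi fields locally; and in the situations relevant to this paper ($M$ a closed surface, hence complete) $\exp$ is defined on all of $TM$ and no restriction is needed at all. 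The remaining verifications are routine: smoothness of $f$ comes from smoothness of $\exp$, of $\sigma$, of $V$ and of parallel transport, and the uniqueness of Jacobi fields with given data at $t_0$ is the standard uniqueness theorem for linear ODEs.
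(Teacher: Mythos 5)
Your proof is correct, but it takes a genuinely different route from the paper's. You use the initial-value characterization of Jacobi fields --- a Jacobi field is determined by $(Y(t_0),\dot Y(t_0))$ at a single point --- and realize arbitrary data $(v,w)$ explicitly via $f(t,s)=\exp_{\sigma(s)}\bigl((t-t_0)V(s)\bigr)$ with $\sigma(s)=\exp_{\gamma(t_0)}(sv)$ and $V$ the parallel transport of $\dot\gamma(t_0)+sw$; the key computation is $\dot J(t_0)=\nabla_{\partial_s}\partial_t f=\tfrac{DV}{ds}(0)=w$, which is right. The paper instead follows Milnor's two-point argument: for $t_2$ sufficiently close to $t_1$ a Jacobi field is determined by its \emph{values} at $t_1$ and $t_2$ (no derivative needed), and the variation is built by joining moving endpoints $h_1(s)$, $h_2(s)$ by the unique short geodesic between them. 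Your version buys an explicit formula and relies only on the standard ODE uniqueness theorem, at the cost of the symmetry computation $\nabla_{\partial_t}\partial_s f=\nabla_{\partial_s}\partial_t f$ and the parallel-transport identity; the paper's version avoids computing any covariant derivative of the variation field but needs the existence of totally normal neighbourhoods and the fact that nearby parameters are non-conjugate. Your remark on the domain of $f$ (restricting to compact subintervals, or invoking completeness of the closed surfaces actually used in the paper) addresses a point the paper glosses over as well, so there is no gap.
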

%
%

We will now be interested in \emph{orthogonal} Jacobi fields:

\begin{lemme} \label{lemmaOrthogonalSmooth}
If $Y(t)$ and $\dot Y(t)$ are orthogonal to $\dot \gamma$ for some $t \in \mathbb R$, then they remain orthogonal for all $t \in \mathbb R$.
\end{lemme}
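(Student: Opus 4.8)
The natural object to track is the scalar function $f(t) = \prodscal{Y(t)}{\dot\gamma(t)}$ measuring the failure of orthogonality. The plan is to show that $f$ satisfies $\ddot f = 0$, so that it is an affine function of $t$; the hypothesis will force both $f$ and $\dot f$ to vanish at one point, hence $f \equiv 0$, and then orthogonality of $\dot Y$ to $\dot\gamma$ follows by differentiating once more.

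First I would compute $\dot f$. Since $\gamma$ is a geodesic, $\nabla_{\dot\gamma}\dot\gamma = 0$, so by compatibility of the Levi-Civita connection with the metric,
\[ \dot f(t) = \prodscal{\nabla_{\dot\gamma} Y}{\dot\gamma} + \prodscal{Y}{\nabla_{\dot\gamma}\dot\gamma} = \prodscal{\dot Y}{\dot\gamma}. \]
Differentiating again and using the Jacobi equation $\ddot Y = -R(\dot\gamma, Y)\dot\gamma$,
\[ \ddot f(t) = \prodscal{\ddot Y}{\dot\gamma} + \prodscal{\dot Y}{\nabla_{\dot\gamma}\dot\gamma} = -\prodscal{R(\dot\gamma, Y)\dot\gamma}{\dot\gamma}. \]
The key algebraic point is the antisymmetry $\prodscal{R(X,Y)Z}{W} = -\prodscal{R(X,Y)W}{Z}$ of the Riemann tensor in its last two arguments; taking $Z = W = \dot\gamma$ gives $\prodscal{R(\dot\gamma, Y)\dot\gamma}{\dot\gamma} = 0$, hence $\ddot f \equiv 0$.

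Therefore $f(t) = f(t_0) + (t - t_0)\dot f(t_0)$ for the given $t_0$. By assumption $Y(t_0) \perp \dot\gamma(t_0)$ and $\dot Y(t_0) \perp \dot\gamma(t_0)$, which says exactly $f(t_0) = 0$ and $\dot f(t_0) = \prodscal{\dot Y(t_0)}{\dot\gamma(t_0)} = 0$. Hence $f \equiv 0$, i.e. $Y(t) \perp \dot\gamma(t)$ for all $t$; and then $\prodscal{\dot Y(t)}{\dot\gamma(t)} = \dot f(t) = 0$ for all $t$, i.e. $\dot Y(t) \perp \dot\gamma(t)$ for all $t$ as well. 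There is no real obstacle here: the only point requiring care is invoking the correct symmetry of the curvature tensor to kill $\ddot f$; everything else is a routine application of the geodesic and Jacobi equations.
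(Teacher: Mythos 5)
Your proof is correct and follows essentially the same route as the paper: both arguments reduce the lemma to showing that $\frac{d}{dt}\,g(\dot Y,\dot\gamma)=0$, so that $g(Y,\dot\gamma)$ is affine in $t$ and vanishes identically once it and its derivative vanish at a single point. The only difference is cosmetic: you annihilate the second derivative via the antisymmetry of the Riemann tensor in its last two slots applied to the Jacobi equation, whereas the paper commutes covariant derivatives in the geodesic variation and uses that each $f(\cdot,s)$ is a geodesic; the two computations are equivalent.
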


From now on, assume that $M$ has dimension $2$, that $\gamma$ is a unit speed geodesic, and that $Y$ is a Jacobi field which is orthogonal to $\dot \gamma$. Choose an orientation of the normal bundle of $\gamma$ in $M$ (which has dimension $1$), \emph{i.e.} a vector $e(t) \in T^1_{\gamma(t)}M$ orthogonal to $\dot \gamma(t)$, so that $Y(t)$ is identified by one real coordinate, noted $y(t) = g(Y(t), e(t))$.

The quantity $\dot y$ satisfies
\[ \dot y = \deriv{f}{t} \cdot g(Y, e) = g(\nabla_{\deriv{f}{t}}Y, e) + g(Y, \nabla_{\deriv{f}{t}}e) = g(\nabla_{\deriv{f}{t}}Y, e). \]
Thus:
\[ \dot y = g(\nabla_{\deriv{f}{t}}\deriv{f}{s}, e) = g(\nabla_{\deriv{f}{s}} \dot \gamma, e). \]

In other words, $\dot y$ measures the infinitesimal variation of the vector $\dot \gamma$ with respect to $s$. Thus, when $y$ and $\dot y$ have the same sign, the Jacobi field is diverging: the geodesics go away from each other. When $y$ and $\dot y$ have opposite signs, the Jacobi field is converging. We will consider the ratio $u = \frac{\dot y}{y}$, when it is well-defined (\emph{i.e.} $y \neq 0$), to measure the convergence rate.

\begin{center}
\begin{tabular}{ccc}
	\begin{tikzpicture}[scale=0.5]
		\draw (0, -3) -- (0,3);
		\draw[->] (0,0) -- (6,0);
		\draw[->] (0,1) -- (5.5,1.5);
		\draw[->] (0,2) -- (4.5,3);
		\draw[->] (0,-1) -- (5.5,-1.5);
		\draw[->] (0,-2) -- (4.5,-3);
	\end{tikzpicture}
	& \hspace{3cm} &
	\begin{tikzpicture}[scale=0.5]
		\draw (0, -3) -- (0,3);
		\draw[->] (0,0) -- (6,0);
		\draw[->] (0,1) -- (5.5,0.7);
		\draw[->] (0,2) -- (4.5,1.5);
		\draw[->] (0,-1) -- (5.5,-0.7);
		\draw[->] (0,-2) -- (4.5,-1.5);
	\end{tikzpicture}
	\\
	$u > 0$ & & $u < 0$
\end{tabular}
\end{center}

\begin{prop}
When it is well-defined, $u$ is a solution of the Riccati equation:
\[ \dot u(t) = - K(\gamma(t)) - u^2(t). \]
where $K$ is the Gaussian curvature.
\end{prop}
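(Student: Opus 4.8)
The plan is to differentiate $u = \dot y / y$ once more and use the Jacobi equation to turn the vector equation $\ddot Y = -R(\dot\gamma,Y)\dot\gamma$ into the \emph{scalar} second-order equation $\ddot y = -K(\gamma(t))\, y$, from which the Riccati equation follows by a one-line computation. First I would record that the chosen unit normal field $e(t)$ is parallel along $\gamma$: from $g(e,e)=1$ we get $g(\nabla_{\dot\gamma}e,e)=0$, and from $g(e,\dot\gamma)=0$ together with $\nabla_{\dot\gamma}\dot\gamma=0$ we get $g(\nabla_{\dot\gamma}e,\dot\gamma)=-g(e,\nabla_{\dot\gamma}\dot\gamma)=0$; since $\{\dot\gamma(t),e(t)\}$ is an orthonormal basis of $T_{\gamma(t)}M$, this forces $\nabla_{\dot\gamma}e=0$.

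Starting from the identity $\dot y = g(\nabla_{\dot\gamma}Y,e)$ already established in the text, I would differentiate again and use $\nabla_{\dot\gamma}e=0$:
\[ \ddot y = g(\nabla_{\dot\gamma}\nabla_{\dot\gamma}Y,e) + g(\nabla_{\dot\gamma}Y,\nabla_{\dot\gamma}e) = g(\ddot Y, e) = -g(R(\dot\gamma,Y)\dot\gamma,e), \]
the last equality being the Jacobi equation. Since $Y$ is orthogonal to $\dot\gamma$ and $M$ has dimension $2$, the field $Y$ is a scalar multiple of $e$, namely $Y = y\,e$, so by linearity of the Riemann tensor $\ddot y = -y\, g(R(\dot\gamma,e)\dot\gamma,e)$. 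The key step is then to recognize that $g(R(\dot\gamma,e)\dot\gamma,e)$ is exactly the sectional curvature of the plane spanned by $\dot\gamma$ and $e$ — the normalizing factor $g(\dot\gamma,\dot\gamma)g(e,e)-g(\dot\gamma,e)^2$ equals $1$ because $\dot\gamma,e$ are orthonormal — and that on a surface the sectional curvature equals the Gaussian curvature $K(\gamma(t))$. This yields the scalar Jacobi equation $\ddot y = -K(\gamma(t))\, y$.

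Finally, wherever $y\neq 0$, so that $u=\dot y/y$ is defined, a direct computation gives
\[ \dot u = \frac{\ddot y}{y} - \left(\frac{\dot y}{y}\right)^2 = -K(\gamma(t)) - u^2(t), \]
which is the claimed Riccati equation. There is no serious obstacle in this argument; the only point that deserves care is the reduction from the vector Jacobi equation to the scalar one, which uses $\dim M = 2$ in two places — to pin $Y$ down to a single real coordinate along the parallel normal field $e$, and to identify the relevant sectional curvature with $K$.
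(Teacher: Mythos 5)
Your proof is correct and follows essentially the same route as the paper: reduce the vector Jacobi equation to the scalar equation $\ddot y = -K y$ using $\dim M = 2$, then differentiate $u = \dot y / y$. The only difference is cosmetic — the paper quotes the two-dimensional formula $\prodscal{R(a,b)c}{d} = K\,(g(a,c)g(d,b)-g(a,d)g(c,b))$ to read off the coordinate $Ky$ of $R(\dot\gamma,Y)\dot\gamma$, whereas you identify $g(R(\dot\gamma,e)\dot\gamma,e)$ with the sectional (hence Gaussian) curvature and, helpfully, make explicit the parallelism $\nabla_{\dot\gamma}e=0$ that the paper uses implicitly.
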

%

The solutions of this equation are not always defined for all times: it may happen that $u(t)$ blows up to $-\infty$ in positive time (or to $+\infty$ in negative time). This corresponds to the phenomenon of convergence of the wavefront: up to order $1$, all the geodesics of the infinitesimal variation ``gather at one point''. In most cases, the Jacobi field becomes divergent just after the convergence point (Figure~\ref{figConvergence}).

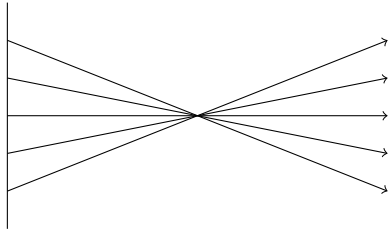
\begin{figure}[!ht]
\begin{center}
	\begin{tikzpicture}[scale=0.5]
		\draw (0, -3) -- (0,3);
		\draw[->] (0,0) -- (10,0);
		\draw[->] (0,1) -- (10,-1);
		\draw[->] (0,2) -- (10,-2);
		\draw[->] (0,-1) -- (10,1);
		\draw[->] (0,-2) -- (10,2);
	\end{tikzpicture}
	\caption{$u$ is not well-defined at the convergence point.} \label{figConvergence}
\end{center}
\end{figure}

From the fact that $u$ satisfies a differential equation, one infers the following \emph{order preserving property}:

\begin{prop} \label{propOrder}
Consider a geodesic $\gamma$ and two Jacobi fields $Y_1(t)$ and $Y_2(t)$ defined on a time interval $[a,b]$. Assume that $y_1(t)$ and $y_2(t)$ do not vanish in this interval (\emph{i.e.} $u_1(t)$ and $u_2(t)$ are well-defined for $t \in [a,b]$). Then $u_2(b) - u_1(b)$ has the same sign as $u_2(a) - u_1(a)$.
\end{prop}

\subsection{Jacobi fields for billiards}

Recall that a smooth billiard $D$ is a compact subset of a Riemannian surface $(M, g)$, such that $D$ has a smooth boundary while $M$ has no boundary. We will write $\prodscal{\cdot}{\cdot}$ for $g(\cdot, \cdot)$.

Consider a billiard trajectory $\gamma$ and a unit speed variation of this trajectory
\[ f(t, s): (a, b) \times (c, d) \to D \]
(defined for all times $t \in (a, b)$, except for the collision times) such that $f(.,0)$ is the trajectory $\gamma$ and for each $s \in (c,d)$, $f(.,s)$ is a billiard trajectory.

By analogy with the case of geodesic flows, we shall call
``Jacobi field'' the vector field $Y = \deriv{f}{s}$ along the curve $\gamma$. 
Inside the billiard, $Y$ satisfies the equation $\ddot Y(t) = K(t) Y(t)$, where $K(t)$ is the curvature at the point $\gamma(t)$ (if the billiard is flat, then $\ddot Y(t) = 0$). At a (non-grazing) collision time, with an angle of incidence $\theta \in (0, \pi/2]$, $Y$ undergoes a discontinuity, which we are now going to study.


Consider a smooth map $s \mapsto \tau(s)$ such that $\tau(s)$ is a collision time of $f(.,s)$ for all $s \in (c, d)$ (reducing the interval $(c, d)$ if necessary). The collision occurs on some component $\Gamma$ of the boundary $\partial D$: assume that $r \mapsto \Gamma(r)$ is a parametrization by arc length and define $r(s)$ so that $\Gamma(r(s))$ is the point where the collision occurs for each $s \in (c, d)$. The parametrization of $\Gamma$ is chosen so that $\prodscal{\dot \gamma (\tau(0)^-)}{ \frac{d\Gamma}{dr}(r(0))} \geq 0$. As in Section~\ref{sectJacobiGeodesic}, choose a section $t \mapsto e_1(t)$ of the unit normal bundle of the trajectory $t \mapsto \gamma(t)$, such that \[ \prodscal{e_1(\tau(0)^-)}{\frac{d\Gamma}{dr}(r(0))} \leq 0 \quad \text{and} \quad \prodscal{e_1(\tau(0)^+)}{\frac{d\Gamma}{dr}(r(0))} \geq 0. \] Define \[ y_\bot(t) = \prodscal{Y(t)}{e_1(t)} \quad \text{and} \quad y_{\parallelsum}(t) = \prodscal{Y(t)}{\dot \gamma(t)}. \]

\begin{prop} \label{propBilliardJacobiParallel}

Writing $y_\bot^\pm = y_\bot(\tau(0)^\pm)$, and defining in the same way $y_{\parallelsum}^\pm$, we have:

\[ y_\bot^+ = - y_\bot^- \quad \text{and} \quad y_{\parallelsum}^+ = y_{\parallelsum}^-.
\]
\end{prop}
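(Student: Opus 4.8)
The plan is to analyze the collision map for the billiard flow at the level of the variation field $Y$, decomposing it into its component $y_\parallelsum$ along $\dot\gamma$ and its component $y_\bot$ along the normal $e_1$. The key geometric input is the reflection law: for each $s$, the trajectory $f(\cdot,s)$ hits the wall $\Gamma$ at the point $\Gamma(r(s))$ at time $\tau(s)$, and the incoming velocity $\dot\gamma_s(\tau(s)^-)$ and outgoing velocity $\dot\gamma_s(\tau(s)^+)$ are related by reflection across the tangent line of $\Gamma$ at $\Gamma(r(s))$ — equivalently, they have the same tangential component along $\frac{d\Gamma}{dr}$ and opposite normal components. I would first write $Y(\tau(0)^\pm)$ explicitly in terms of the $s$-derivatives of the collision data. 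Since $f(\tau(s),s) = \Gamma(r(s))$ as long as we evaluate on the correct side, differentiating in $s$ at $s=0$ gives
\[
\dot\gamma(\tau(0)^\pm)\,\tau'(0) + Y(\tau(0)^\pm) = \frac{d\Gamma}{dr}(r(0))\, r'(0),
\]
so $Y(\tau(0)^+) - Y(\tau(0)^-) = \bigl(\dot\gamma(\tau(0)^-) - \dot\gamma(\tau(0)^+)\bigr)\tau'(0)$.

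Next I would extract the two scalar components. Projecting the displayed identity onto $\dot\gamma(\tau(0)^\pm)$ (which is a unit vector) and onto $e_1(\tau(0)^\pm)$, and using that the angle of incidence equals the angle of reflection, one finds that $\dot\gamma(\tau(0)^-) - \dot\gamma(\tau(0)^+)$ is purely normal to $\Gamma$ (no component along $\frac{d\Gamma}{dr}$), of the form $2\sin\theta\cdot n$ where $n$ is the unit normal to the wall. The key computation is to resolve this normal vector $n$ and the tangent vector $\frac{d\Gamma}{dr}$ in the frame $(\dot\gamma^\pm, e_1^\pm)$ on each side; the sign conventions fixed in the setup — namely $g(\dot\gamma(\tau(0)^-),\frac{d\Gamma}{dr})\geq 0$, and the two sign constraints on $g(e_1(\tau(0)^\mp),\frac{d\Gamma}{dr})$ — are precisely what is needed to pin down the signs. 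For the tangential component: $y_\parallelsum^+ = g(Y(\tau(0)^+),\dot\gamma(\tau(0)^+))$, and since $\dot\gamma(\tau(0)^+)$ is a reflection of $\dot\gamma(\tau(0)^-)$, a short computation shows the $\tau'(0)$-correction term contributes equally on both sides and the frame reflection flips nothing relevant, yielding $y_\parallelsum^+ = y_\parallelsum^-$. For the normal component: because $e_1^+$ points to the opposite side of $\frac{d\Gamma}{dr}$ from $e_1^-$ (by the sign conditions), the identification of $Y$ picks up the sign flip, giving $y_\bot^+ = -y_\bot^-$.

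I expect the main obstacle to be bookkeeping of the sign conventions rather than any deep idea: one must carefully verify, using the stated inequalities on $g(e_1^\pm, \frac{d\Gamma}{dr})$ and $g(\dot\gamma(\tau(0)^-),\frac{d\Gamma}{dr})$, that the reflection of the orthonormal frame $(\dot\gamma,e_1)$ across the wall's tangent line sends $e_1^-$ to $-e_1^+$ and $\dot\gamma^-$ to $\dot\gamma^+$ (up to the ambiguity that both frames are orthonormal and positively/negatively oriented consistently). Concretely I would introduce the angle $\theta$ so that in the wall-adapted frame the incoming direction is $(\cos\theta, -\sin\theta)$ and the outgoing is $(\cos\theta, \sin\theta)$ in the basis $(\frac{d\Gamma}{dr}, n)$, write $e_1^\pm$ in that basis too, substitute into $Y(\tau(0)^\pm) = \frac{d\Gamma}{dr}r'(0) - \dot\gamma^\pm\tau'(0)$, and read off $y_\bot^\pm$ and $y_\parallelsum^\pm$. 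The identity $y_\parallelsum^+ = y_\parallelsum^-$ then falls out from $g(\dot\gamma^+, \frac{d\Gamma}{dr}) = g(\dot\gamma^-,\frac{d\Gamma}{dr})$ and $g(\dot\gamma^\pm,\dot\gamma^\pm)=1$, while $y_\bot^+ = -y_\bot^-$ follows from the opposite signs of $g(e_1^\pm,\frac{d\Gamma}{dr})$ together with $g(e_1^\pm, n) $ having the appropriate (equal) value. A sanity check in the flat case, where $Y$ is affine and the picture reduces to elementary plane reflection, confirms the signs.
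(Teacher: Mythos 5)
Your proposal is correct and follows essentially the same route as the paper: differentiate the incidence identity $f(\tau(s),s)=\Gamma(r(s))$ in $s$ on each side of the collision to get $Y(\tau(0)^\pm)=\frac{d\Gamma}{dr}\,r'(0)-\dot\gamma(\tau(0)^\pm)\,\tau'(0)$, then project onto $e_1^\pm$ and $\dot\gamma^\pm$ and use the stated sign conventions together with the reflection law to equate the two expressions for $r'(0)$ and for $\tau'(0)$. The paper simply records the resulting scalar formulas for $\frac{d}{ds}\Gamma(r(s))$ and $\frac{d}{ds}\tau(s)$ directly, which is exactly the bookkeeping you describe.
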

\begin{proof}
On the one hand,
\[ \frac{d}{ds} \Gamma(r(s)) = \frac{y_\bot^-}{\sin \theta} \quad \text{and} \quad \frac{d}{ds} \Gamma(r(s)) = - \frac{y_\bot^+}{\sin \theta}, \]
so $y_\bot^+ = - y_\bot^-$.

On the other hand,
\[ \frac{d}{ds} \tau(s) = \frac{- y_\bot^-}{\tan \theta} + y_{\parallelsum}^- \quad \text{and} \quad \frac{d}{ds} \tau(s) = \frac{y_\bot^+}{\tan \theta} + y_{\parallelsum}^+, \]
so $y_{\parallelsum}^+ = y_{\parallelsum}^+.$
\end{proof}

From now on, we consider a \emph{perpendicular Jacobi field}, that is, we assume that $y_{\parallelsum}^- = 0$. Proposition~\ref{propBilliardJacobiParallel} implies that $y_{\parallelsum}^+ = 0$: in other words, any perpendicular Jacobi field remains perpendicular after a collision. We will write $y(t) = y_\bot(t)$ and define $u(t) = \dot y(t) / y(t)$.

\begin{prop}
Assume that the geodesic variation $f$ corresponds to an orthogonal Jacobi field.

At a collision,
\[ \begin{aligned} y^+ &= - y^-
\\ \dot y^+ &= - \dot y^- + \frac{2 \kappa}{\sin \theta} y^-
\\ u^+ &= u^- - \frac{2 \kappa}{\sin \theta} \end{aligned} \]
where $\kappa$ is the curvature of the boundary and $\theta$ is the angle of incidence.

\end{prop}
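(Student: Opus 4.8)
The plan is to obtain the jump of $\dot y$ by covariantly differentiating the billiard reflection law along the curve traced out by the collision point, viewing the law as a pointwise linear relation in the tangent space there. Note first that only the middle identity requires real work: $y^+ = -y^-$ is exactly Proposition~\ref{propBilliardJacobiParallel} (applied to the perpendicular component), and once $y^+ = -y^-$ and $\dot y^+ = -\dot y^- + \tfrac{2\kappa}{\sin\theta}y^-$ are in hand, dividing gives $u^+ = \dot y^+/y^+ = u^- - \tfrac{2\kappa}{\sin\theta}$ immediately.

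Keeping the notation of the subsection, let $T = d\Gamma/dr$ be the unit tangent of the wall, $N$ its unit normal pointing into $D$, write $c(s) = f(\tau(s), s) = \Gamma(r(s))$ for the (moving) collision point of the varied trajectory, and let $V^\pm(s) = \partial_t f(\tau(s)^\pm, s)$ be the incoming and outgoing unit velocities, regarded as vector fields along the curve $c$. The reflection law says that at $c(s)$ the component of the velocity tangent to the wall is preserved and the normal component reversed, i.e.
\[ V^+(s) = V^-(s) - 2\, g\bigl(V^-(s),\, N(r(s))\bigr)\, N(r(s)) \qquad \text{for all } s . \]
I would differentiate this identity covariantly along $c$ and evaluate at $s = 0$. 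Two auxiliary computations feed in. First, $\tfrac{D}{ds}V^\pm(s)\big|_{s=0} = \dot Y^\pm$: indeed $c'(s) = \tau'(s)\,\partial_t f + \partial_s f$, the term $\nabla_{\partial_t f}\partial_t f$ vanishes because each segment is a geodesic, and $\nabla_{\partial_s f}\partial_t f = \nabla_{\partial_t f}\partial_s f = \dot Y$ by the same commutation of covariant derivatives used earlier for the Jacobi equation. Second, $\tfrac{D}{ds}N(r(s))\big|_{s=0} = r'(0)\,\tfrac{D}{dr}N = -\kappa\, r'(0)\, T$, where $\kappa = g(\tfrac{DT}{dr}, N)$ is the geodesic curvature of the wall and $r'(0)$, the along-wall displacement of the collision point, is proportional to $y^-/\sin\theta$ as computed in the proof of Proposition~\ref{propBilliardJacobiParallel}.

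It then remains to extract the $e_1^+$-component of the differentiated identity. On each geodesic segment $e_1$ is parallel (in dimension $2$ the unit normal to a parallel field is parallel), so $\dot Y^\pm = \dot y^\pm e_1^\pm$ there; expressing $\dot\gamma^\pm$, $e_1^\pm$ and $N$ in the orthonormal frame $(T, N)$ at $c(0)$ — explicit functions of $\theta$ once the sign conventions of the subsection are imposed, e.g.\ $g(\dot\gamma^-, N) = -\sin\theta$, $g(\dot\gamma^-, T) = \cos\theta$, $g(e_1^-, N) = -\cos\theta$, $g(e_1^\pm, T) = \mp\sin\theta$ — one substitutes everything, contracts with $e_1^+$, and simplifies using $\sin^2\theta + \cos^2\theta = 1$; the terms containing $\kappa$ collapse to $\tfrac{2\kappa}{\sin\theta}y^-$ and the remaining ones to $-\dot y^-$, which is the claim.

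The main obstacle is conceptual rather than computational: because both the collision time $\tau(s)$ and the base point $c(s)$ move with $s$, a naive $s$-derivative of $V^\pm(s)$ is not meaningful, and one must pass to the covariant derivative along $c$ and invoke the geodesic equation on each segment to identify it with $\dot Y^\pm$. Once that is set up, the rest is bookkeeping — but bookkeeping that is genuinely error-prone, since one has to stay consistent with all the orientation choices for $e_1^-$, $e_1^+$, $N$ and $T$ fixed in the subsection and in Proposition~\ref{propBilliardJacobiParallel}, from which both $y^+ = -y^-$ and the value of $r'(0)$ are imported.
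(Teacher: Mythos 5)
Your proposal is correct and is essentially the paper's argument: the paper likewise differentiates the reflection law covariantly in $s$, identifies $\tfrac{D}{ds}\,\partial_t f(\tau(s)^\pm,s)$ with $\dot Y^\pm$, and feeds in the value of $\tfrac{d}{ds}\Gamma(r(s))$ (i.e.\ $\pm y^-/\sin\theta$) from Proposition~\ref{propBilliardJacobiParallel}, exactly as you outline. The only cosmetic difference is that the paper differentiates the scalar form $\prodscal{\partial_t f(\tau(s)^+)-\partial_t f(\tau(s)^-)}{\frac{d\Gamma}{dr}(r(s))}=0$ of the reflection law rather than the full vector identity, so that $\nabla_{\frac{d\Gamma}{dr}}\frac{d\Gamma}{dr}=\kappa N$ enters directly and the final frame bookkeeping you sketch reduces to two short inner products.
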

\begin{proof}
The first equality was already proved in Proposition~\ref{propBilliardJacobiParallel}. To obtain the next equality, consider the billiard reflection law:

\[ \prodscal{\deriv{f}{t} (\tau(s)^+) - \deriv{f}{t} (\tau(s)^-)}{\deriv{\Gamma}{r}(r(s))} = 0. \]

After differentiation with respect to $s$ we obtain:
\[ \prodscal{\dot Y^+ - \dot Y^-}{\deriv{\Gamma}{r}(r(s))} + \prodscal{\deriv{f}{t} (\tau(s)^+) - \deriv{f}{t} (\tau(s)^-)}{\nabla_{\deriv{\Gamma}{r}} \deriv{\Gamma}{r} (r(s)) \cdot \deriv{r}{s}} = 0. \]

We may now compute:
\[ \prodscal{\dot Y^+ - \dot Y^-}{\deriv{\Gamma}{r}(r(s))} = (\dot y^+ + \dot y^-) \sin \theta, \]
\[ \prodscal{\deriv{f}{t} (\tau(s)^+) - \deriv{f}{t} (\tau(s)^-)}{\nabla_{\deriv{\Gamma}{r}} \deriv{\Gamma}{r} (r(s)) \cdot \deriv{r}{s}} = 2 \sin \theta \cdot \kappa \cdot \frac{- y^-}{\sin \theta}. \]

Thus:
\[ \dot y^+ = - \dot y^- + \frac{2 \kappa}{\sin \theta} y^- \]
and since $u = \dot y / y$,
\[ u^+ = u^- - \frac{2 \kappa}{\sin \theta}. \]

\end{proof}

In particular, positively curved walls decrease the value of $u$ (and tend to make the Jacobi field converge), just as the positive curvature of a Riemannian surface. Likewise, negatively curved walls make the quantity $u$ increase, as the negative curvature of a surface.

\section{Proof of Theorem~\refThmRiccatiBilliard} \label{sectProof}

We fix the constants $A$, $c$, $C$ and $m$ which appear in the statement of the theorem, and assume that $A \geq 2$. In this section, we consider times such as $t_a$, $t_b$ or $t_0$, which must not be confused with the times $t_k$ ($k \in \mathbb Z$) which appear in the statement of the theorem.

The readers who are only interested in the proof of Theorem~\ref{improvedConditionConeRiccati} may skip Lemmas~\ref{lemmaUniformContinuity}, \ref{lemmaChangeInitial} and~\ref{lemmaTildes}.

\begin{lemme} \label{lemmaDependence}
Assume that $u$ and $v$ are two solutions of the Riccati equation on an interval $[t_a, t_b]$ with $c/3 \leq t_b - t_a \leq 2C$, such that $0 \leq u(t_a)-v(t_a) \leq \exp (-4AC)$. Assume that $u(t) \geq -A$ for all $t \in [t_a, t_b]$. Then
\[ u(t_b) - v(t_b) \leq (u(t_a)-v(t_a)) \exp (2A(t_b - t_a)). \]
\end{lemme}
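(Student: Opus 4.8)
The plan is to study the difference $w(t) = u(t) - v(t)$ directly. Both $u$ and $v$ satisfy the Riccati equation $\dot u = -K - u^2$, so subtracting gives $\dot w = v^2 - u^2 = -(u+v)w$. Thus $w$ solves a \emph{linear} ODE, and we have the exact formula
\[ w(t^b) = w(t^a) \exp\!\left( -\int_{t^a}^{t^b} (u(s) + v(s))\, ds \right). \]
Since $w(t^a) \geq 0$, we have $w(t) \geq 0$ for all $t$ (the exponential factor is positive), so $v(t) \leq u(t)$ throughout; in particular $v(s) \geq -A$ is \emph{not} automatic, but we do know $u(s) \geq -A$ by hypothesis. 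To bound the exponent from below we need a lower bound on $u(s) + v(s)$, i.e. an \emph{upper} bound on $-(u(s)+v(s))$, equivalently we need to control how negative $v$ can get.

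The key step will be to show $v(s) \geq -A$ as well on $[t^a,t^b]$ — or at least a bound like $v(s) \geq -2A$ — so that $u(s)+v(s) \geq -2A$ (resp. $-3A$) and hence $-\int_{t^a}^{t^b}(u+v) \leq 2A(t^b-t^a)$, which gives exactly the claimed inequality $w(t^b) \leq w(t^a)\exp(2A(t^b-t^a))$. To get $v \geq -A$: since $0 \leq w(t^a) \leq \exp(-4AC)$ is tiny and $w$ stays nonnegative, $v$ starts just below $u$. If $v$ were to reach a very negative value, then on the interval where $v < u$ and $v$ is decreasing fast, $w = u-v$ would be growing — but we can bootstrap: as long as $v(s) \geq -A$ on a subinterval, the formula gives $w(s) \leq w(t^a)\exp(2A(s-t^a)) \leq \exp(-4AC)\exp(2A\cdot 2C) = \exp(-4AC+4AC) = 1$, wait — more carefully $\exp(2A(t^b-t^a)) \leq \exp(4AC)$, so $w(s) \leq \exp(-4AC)\exp(4AC) = 1$; actually we want $w$ small, so we should use the sharper bound that on $[t^a, s]$ with $s$ close to $t^a$, $w$ is still of order $\exp(-4AC)$, hence $v(s) = u(s) - w(s) \geq -A - w(s) > -A - 1 \geq -2A$ using $A \geq 2$. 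Then redo the estimate with $u + v \geq -3A$ and a continuity/connectedness argument (the set of $s$ where $v \geq -2A$ is open, closed, nonempty in $[t^a,t^b]$) to propagate the bound to all of $[t^a,t^b]$.

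Once $v \geq -2A$ (hence $u+v \geq -3A$) is established on the whole interval, one more pass through the same reasoning with the stated hypothesis $u \geq -A$ tightens this: in fact we only ever need $u + v \geq -2A$ to get the stated constant, so I would organize the bootstrap to close with the clean bound. The formula then yields
\[ w(t^b) = w(t^a)\exp\!\left(-\int_{t^a}^{t^b}(u+v)\right) \leq w(t^a)\exp(2A(t^b-t^a)), \]
as required.

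\textbf{Main obstacle.} The delicate point is the bootstrap controlling how negative $v$ becomes: a priori $v$ could blow up to $-\infty$ in finite time (Riccati solutions do that), so one must argue that the smallness of the initial gap $w(t^a) \leq \exp(-4AC)$ together with $u \geq -A$ prevents $v$ from escaping the band $[-2A, \infty)$ before time $t^b$. This is a standard continuity-method / Grönwall-type argument, but it has to be set up carefully so that the constant in the final exponent comes out to be exactly $2A$ and not something larger; the choice of the threshold $\exp(-4AC)$ and the bound $t^b - t^a \leq 2C$ in the hypothesis are precisely what make the numbers work, and the assumption $A \geq 2$ is used to absorb the $O(1)$ slack ($v \geq u - w \geq -A - 1 \geq -2A$). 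The interval-length lower bound $c/3 \leq t^b - t^a$ plays no role in this particular estimate (it will matter elsewhere), so I would not use it here.
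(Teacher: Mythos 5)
Your plan is essentially the paper's own proof: the paper likewise writes $\dot w = -(u+v)\,w$ for $w = u - v$, notes $w \geq 0$, and runs the same bootstrap via a stopping time (the first $t^0$ with $w(t^0) \geq 2$, so that $v \geq u - 2 \geq -A-2$ on $[t^a,t^0]$ using $A \geq 2$), then uses Gr\"onwall and the smallness $w(t^a) \leq e^{-4AC}$ together with $t^b - t^a \leq 2C$ to force $t^0 = t^b$; your observation that the lower bound $c/3 \leq t^b - t^a$ is not needed here is also correct. The one difficulty you rightly flag --- that the bootstrap only yields $u+v \geq -2A - O(1)$ pointwise, so the exponent comes out as $(2A + O(1))(t^b - t^a)$ rather than exactly $2A(t^b-t^a)$ --- is present in the paper's argument too (there $-(u+v) \leq 2A+2$, not $2A$), so it is not a defect of your approach relative to the paper's, and it is absorbed by a harmless adjustment of constants.
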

\begin{proof}
Let $t_0 = \min \setof{t \in [t_a, t_b]}{u(t) - v(t) \geq 2}$ (with $t_0 = t_b$ if this set is empty).

Then for $t \in [t_a, t_0]$, we have $u(t)-v(t) \geq 0$ (by Proposition~\ref{propOrder}) and
\[ \dot u(t) - \dot v(t) = - (u(t)+v(t))(u(t)-v(t)) \leq 2A(u(t)-v(t)). \]
Thus by Grönwall's lemma
\[ u(t) - v(t) \leq (u(t_a) - v(t_a)) \exp (2A (t - t_a)) \leq 1, \]
so $t_0 = t_b$ and the result is proved.
\end{proof}

From now on we will assume that $m \leq \min (\exp (-4AC), 1/4)$ and define \[ \eta = \min (m^3 / (2K_\mathrm{max} + 2), c/3), \] where $K_\mathrm{max}$ is the maximum absolute value of the curvature on $D$.

\begin{lemme} \label{lemmaUniformContinuity}
Assume that $u$ is a solution of the Riccati equation on an interval $[t_b, t_b + \eta]$, during which no collision occurs. If $\abs{u(t_b)} \leq 1/2$, then \[ \abs{u(t_b + \eta) - u(t_b)} \leq m^3. \]
\end{lemme}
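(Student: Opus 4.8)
The plan is to treat this as a short‑time stability statement for the smooth Riccati equation. Since no collision occurs on $[t^b, t^b + \eta]$, the function $u$ satisfies $\dot u(t) = -K(t) - u(t)^2$ throughout this interval, with $\abs{K(t)} \leq K_\mathrm{max}$; the whole point is that $\eta$ has been defined small enough (one has $\eta \leq m^3/(K_\mathrm{max}+4)$, and recall $m \leq 1/4$, hence $m^3 \leq 1/64$) that $u$ cannot drift far from its initial value $u(t^b) \in [-1/2, 1/2]$.

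The only genuine subtlety is that Riccati solutions can blow up to $-\infty$, so the crude bound on $\dot u$ is valid only as long as $u$ itself is controlled; I would therefore phrase the argument as a first‑exit (continuity) argument. Let $t^\ast$ be the largest $t \in [t^b, t^b + \eta]$ such that $\abs{u} \leq 1$ on $[t^b, t]$. On $[t^b, t^\ast]$ we have $u^2 \leq 1$, hence $\abs{\dot u} \leq K_\mathrm{max} + 1$, and therefore $\abs{u(t) - u(t^b)} \leq (K_\mathrm{max}+1)\,\eta$ for all $t \in [t^b, t^\ast]$. Since $(K_\mathrm{max}+1)\,\eta \leq (K_\mathrm{max}+1)\, m^3 / (K_\mathrm{max}+4) < m^3 \leq 1/64 < 1/2$ and $\abs{u(t^b)} \leq 1/2$, this forces $\abs{u(t)} < 1$ \emph{strictly} on $[t^b, t^\ast]$; hence $u$ never reaches the boundary values $\pm 1$, so necessarily $t^\ast = t^b + \eta$ and $\abs{u} \leq 1$ on the entire interval.

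Knowing $\abs{u} \leq 1$ on $[t^b, t^b + \eta]$, the conclusion is a one‑line integration: from $\dot u = -K - u^2 \geq -K_\mathrm{max} - 1$ we get $u(t^b + \eta) \geq u(t^b) - (K_\mathrm{max}+1)\,\eta \geq u(t^b) - m^3$, again because $\eta \leq m^3 / (K_\mathrm{max}+4)$ and $K_\mathrm{max}+1 < K_\mathrm{max}+4$. Since $\abs{u(t^b)} \leq 1/2$, this is exactly the lower bound claimed by the lemma.

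The main (and essentially only) obstacle is the blow‑up of the Riccati equation, which is why the estimate must be run as a bootstrap rather than by directly integrating $\dot u$ over $[t^b, t^b + \eta]$; the enlarged constant $K_\mathrm{max}+4$ in the definition of $\eta$ is there precisely to absorb the crude bound $u^2 \leq 1$ that comes out of that bootstrap, and no sharper estimate is needed.
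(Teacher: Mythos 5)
Your proof is correct and follows essentially the same route as the paper's: a first-exit (bootstrap) argument using the smallness of $\eta$ relative to $K_\mathrm{max}+4$ to keep $u^2$ under control on the whole interval, followed by a crude integration of $\dot u = -K - u^2$; your variant is in fact slightly cleaner, since it shows the exit never occurs, whereas the paper handles the (vacuous) exit case separately. Note also that the bound you actually derive, $u(t^b+\eta) \geq u(t^b) - m^3$, is the intended conclusion: the expression $t^b - m^3$ in the statement is a typo for $u(t^b) - m^3$, as the paper's own proof and the later applications in Lemmas~\ref{lemmaChangeInitial} and~\ref{lemmaTildes} confirm.
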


\begin{proof}
Consider $t_0 = \min \setof{t \in [t_b, t_b + \eta]}{\abs{u(t) - u(t_b)} \geq m^3}$ (or $t_0 = t_b + \eta$ if this set is empty). Then for all $t \in (t_b, t_0)$:
\[ \abs{u(t) - u(t_b)} = \abs{ \int_{t_b}^{t} - K(x) - u(x)^2 dx} \leq \eta(K_\mathrm{max} + (\abs{u(t_b)} + m^3)^2) \leq \eta (K_\mathrm{max} + 1) \leq m^3 / 2. \]

This implies that $t_0 = t_b + \eta$ and thus $\abs{u(t_b + \eta) - u(t_b)} \leq m^3$.
\end{proof}

\begin{lemme} \label{lemmaChangeInitial}
Assume that $u$ and $v$ are two solutions of the Riccati equation on an interval $[t_a, t_b]$ with $c/3 \leq t_b - t_a \leq 2C$, with $u(t_a) = 0$ and $v(t_a + \eta) = 0$. Assume that $u(t) \geq -A$ for all $t \in [t_a, t_b]$. Then $v(t_b) \geq u(t_b) - m^2$.
\end{lemme}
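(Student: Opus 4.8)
The plan is to let $u$ run for the short time $\eta$ so that it becomes close to the initial value $0$ of $v$, and then to use Lemma~\ref{lemmaDependence} to control how far $u$ and $v$ can drift apart on the remaining interval. First I would estimate $u(t^a+\eta)$. Since $u(t^a)=0$, Lemma~\ref{lemmaUniformContinuity} applied on the (collision-free) window $[t^a,t^a+\eta]$ gives the lower bound $u(t^a+\eta)\geq -m^3$, while the differential inequality $\dot u=-K-u^2\leq K_{\mathrm{max}}$ together with the definition $\eta=\min(m^3/(K_{\mathrm{max}}+4),c/3)$ gives $u(t^a+\eta)\leq u(t^a)+\eta K_{\mathrm{max}}\leq m^3$. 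As $v(t^a+\eta)=0$, this shows $\abs{u(t^a+\eta)-v(t^a+\eta)}\leq m^3\leq\exp(-4AC)$ (using $m^3\leq m\leq\exp(-4AC)$).

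Next I would study the difference $w=u-v$ on $[t^a+\eta,t^b]$. The crucial observation is that at a collision both $u$ and $v$ jump by the \emph{same} amount $-2\kappa/\sin\theta$, since $\kappa$ and $\theta$ refer to the common trajectory $\gamma$; hence $w$ is continuous across collisions, and between collisions it satisfies the linear equation $\dot w=-(u+v)w$, so $w$ keeps a constant sign on $[t^a+\eta,t^b]$. If $u(t^a+\eta)\leq 0$, then $w(t^a+\eta)\leq 0$, hence $w(t^b)\leq 0$, i.e.\ $v(t^b)\geq u(t^b)\geq u(t^b)-m^2$. If instead $u(t^a+\eta)>0$, then $0\leq u(t^a+\eta)-v(t^a+\eta)\leq\exp(-4AC)$ and $u\geq -A$ on $[t^a+\eta,t^b]\subseteq[t^a,t^b]$, so Lemma~\ref{lemmaDependence} applied on $[t^a+\eta,t^b]$ yields
\[ u(t^b)-v(t^b)\leq (u(t^a+\eta)-v(t^a+\eta))\exp(2A(t^b-t^a-\eta))\leq m^3\exp(4AC)\leq m^2, \]
the last inequality being exactly $\exp(4AC)\leq 1/m$. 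In both cases $v(t^b)\geq u(t^b)-m^2$, as desired.

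Two places call for care. The first is justifying that Lemma~\ref{lemmaUniformContinuity} really applies on $[t^a,t^a+\eta]$, i.e.\ that no collision is hidden in that short window; this should follow from the collision hypotheses in force whenever Lemma~\ref{lemmaChangeInitial} is invoked, together with $\eta\leq c/3$. The second is matching the length condition $c/3\leq t^b-t^a\leq 2C$ of Lemma~\ref{lemmaDependence} after shrinking the interval to $[t^a+\eta,t^b]$: its length is still $\leq 2C$, and it is $\geq c/3$ as soon as $t^b-t^a\geq c$ (the regime in which the lemma is used), while in any case the proof of Lemma~\ref{lemmaDependence} only uses the upper bound $t^b-t^a\leq 2C$. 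I expect the handling of possible collisions in the initial $\eta$-window to be the genuine obstacle; the rest is a short computation with the already-fixed constants $A$, $m$, $c$, $C$ and $\eta$.
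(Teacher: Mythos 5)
Your proof is correct and follows essentially the same route as the paper: run $u$ for time $\eta$ so that it lies within $m^3$ of $v(t^a+\eta)=0$, dispose of one sign of the difference by the sign-preservation of differences of Riccati solutions, and apply Lemma~\ref{lemmaDependence} on $[t^a+\eta,t^b]$. You are in fact slightly more careful than the paper's one-line argument, which cites only the lower bound $u(t^a+\eta)\geq -m^3$ from Lemma~\ref{lemmaUniformContinuity}, whereas the application of Lemma~\ref{lemmaDependence} requires the upper bound $u(t^a+\eta)\leq m^3$ that you derive explicitly from $\dot u\leq K_{\mathrm{max}}$ and the definition of $\eta$.
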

\begin{proof}
If $v(t_a) \geq u(t_a)$ then $v(t_b) \geq u(t_b)$ (by Proposition~\ref{propOrder}) and there is nothing to prove. Therefore we assume that $u(t_a) \geq v(t_a)$. Lemma~\ref{lemmaUniformContinuity} implies that $\abs{u(t_a + \eta)} \leq m^3$ and Lemma~\ref{lemmaDependence} shows that $u(t_b) - v(t_b) \leq m^3 \exp (4AC) \leq m^2$.
\end{proof}

From now on, consider a geodesic $\gamma$ and the times $t_k$ given by the assumptions of Theorem~\ref{thmRiccatiBilliard}. For each $k \in \mathbb Z$, define $\tilde t_k$ in the following way:
\begin{itemize}
\item If there is a collision in the interval $[t_k - c/3, t_k]$, define $\tilde t_k = t_k + \eta$.
\item If not, let $\tilde t_k = t_k$.
\end{itemize}

In the following, if $t_k$ is itself a collision time, by $u(t_k)$ we will mean $u(t_k^+)$.

\begin{lemme} \label{lemmaTildes}
For all $k \in \mathbb Z$, the solution $u$ of the Riccati equation with initial condition $u(\tilde t_k) = 0$ satisfies $u(\tilde t_{k+1}) \geq m/2$.
\end{lemme}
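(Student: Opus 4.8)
The plan is to derive Lemma~\ref{lemmaTildes} by comparing the ``tilded'' solution with the solution attached to the original sequence $(t_k)$, using the three preceding lemmas to absorb the error introduced by shifting the initial time from $t_k$ (or $t_{k+1}$) to $\tilde t_k$ (resp.\ $\tilde t_{k+1}$). First I would fix $k$ and set $t^a = \tilde t_k$, $t^b = \tilde t_{k+1}$; the hypothesis ``no collision in $(t_k - c, t_k)$ and at most one collision in $(t_k, t_{k+1}]$'' together with $c \le t_{k+1} - t_k \le C$ guarantees that $t^b - t^a$ lies in $[c/3, 2C]$ and that at most one collision occurs in $(t^a, t^b]$, so the comparison lemmas apply. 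I would split into cases according to whether a collision occurs in $[t_k - c/3, t_k]$ (so $\tilde t_k = t_k + \eta$) and whether one occurs in $[t_{k+1} - c/3, t_{k+1}]$.

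The core of the argument: let $u$ be the solution with $u(t_k^+) = 0$ furnished by the theorem's hypothesis, so $u(t) \ge -A$ on $[t_k, t_{k+1}]$ and $u(t_{k+1}^+) > m$, and let $\tilde u$ be the solution with $\tilde u(\tilde t_k^+) = 0$. When $\tilde t_k = t_k + \eta$, Lemma~\ref{lemmaChangeInitial} (applied on $[t_k, s]$ for an appropriate $s$, with the roles $u(t_k)=0$, $v(\tilde t_k) = v(t_k + \eta) = 0$) gives $\tilde u(s) \ge u(s) - m^2$ on the overlap; when $\tilde t_k = t_k$ the two solutions agree. Propagating this estimate forward with Lemma~\ref{lemmaDependence} over an interval of length at most $2C$, and using that $m^2 \exp(2A\cdot 2C)$ is still controlled by our smallness assumption $m \le \exp(-4AC)$ (this is exactly why $A$ and $C$ were baked into the choice of $m$), I get $\tilde u(t_{k+1}^+) \ge u(t_{k+1}^+) - m^2\exp(4AC) \ge m - \text{(something} \le m/2)$. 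Finally I must pass from time $t_{k+1}$ to time $\tilde t_{k+1} = t_{k+1} + \eta$ in the case where a collision sits in $[t_{k+1} - c/3, t_{k+1}]$: here no collision occurs on $(t_{k+1}, t_{k+1}+\eta]$ (the ``at most one collision in $(t_k, t_{k+1}]$'' clause, applied to the next index, forces it), so Lemma~\ref{lemmaUniformContinuity} applies to $\tilde u$ on $[t_{k+1}, t_{k+1}+\eta]$ — provided $|\tilde u(t_{k+1}^+)| \le 1/2$, which we can arrange since $\tilde u(t_{k+1}^+)$ is close to $u(t_{k+1}^+)$ and the hypothesis can be taken to also give an upper bound, or one simply notes that if $\tilde u(t_{k+1}^+) > 1/2 \ge m/2$ the conclusion is trivial — yielding $\tilde u(\tilde t_{k+1}) \ge \min(1/2, \tilde u(t_{k+1}^+) - m^3) \ge m - m^2 - m^3 \ge m/2$.

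The bookkeeping has to be done carefully across the four sign/collision cases, and one must verify at each step that the interval lengths genuinely fall in $[c/3, 2C]$ and that $\eta \le c/3$ (which holds by definition of $\eta$) so that shifting by $\eta$ never creates a spurious extra collision or pushes a length out of range. I expect the main obstacle to be exactly this case analysis together with tracking that the accumulated additive error — of order $m^2 \exp(4AC)$ from Lemma~\ref{lemmaDependence} plus an $O(m^3)$ term from Lemma~\ref{lemmaUniformContinuity} — stays below $m/2$; this is not deep but requires the precise inequalities $m \le \min(\exp(-4AC), 1/4)$ and $\eta \le m^3/(K_{\max}+4)$ that were set up precisely for this purpose. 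A subtle point worth isolating is why at most one collision lies in $(\tilde t_k, \tilde t_{k+1}]$: a collision in $(t_k, t_{k+1}]$ might coincide with the ``defining'' collision in $[t_{k+1} - c/3, t_{k+1}]$ that triggers $\tilde t_{k+1} = t_{k+1}+\eta$, in which case $(\tilde t_k, \tilde t_{k+1}]$ still contains only that one collision, or it might be an earlier one, handled directly; either way the hypothesis on collisions transfers to the tilded times, and this is the one place where the structural assumption (rather than just the Riccati estimates) is genuinely used.
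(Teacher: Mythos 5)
Your proposal follows essentially the same route as the paper: handle the shift at the right endpoint with Lemma~\ref{lemmaUniformContinuity} (cost $m^3$) and the shift at the left endpoint with Lemma~\ref{lemmaChangeInitial} (cost $m^2$), so that $u(\tilde t_{k+1}) \geq m - m^2 - m^3 \geq m/2$ since $m \leq 1/4$. One small correction: the extra forward propagation via Lemma~\ref{lemmaDependence} is unnecessary and actually inflates your error to $m^2\exp(4AC)$, which is no longer safely below $m/2$ — Lemma~\ref{lemmaChangeInitial} should simply be applied with $t^b$ equal to the far endpoint, since its conclusion $v(t^b) \geq u(t^b) - m^2$ already incorporates the exponential factor.
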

\begin{proof}
Consider the solution $v$ of the Riccati equation with initial condition $v(t_k) = 0$.

First, we prove that $u(t_{k+1}) \geq m - m^2$. If $\tilde t_k = t_k$, we have $u = v$ and by assumption $v(t_{k+1}) \geq m$, so $u(t_{k+1}) \geq m$. If $\tilde t_k = t_k + \eta$, Lemma~\ref{lemmaChangeInitial} applied to $v$ and $u$ gives us $u(t_{k+1}) \geq v(t_{k+1}) - m^2 \geq m - m^2$.

Now, we prove that $u(\tilde t_{k+1}) \geq m/2$. If $\tilde t_{k+1} = t_{k+1}$, then $u(\tilde t_{k+1}) = u(t_{k+1}) \geq m - m^2 \geq m/2$. If $\tilde t_{k+1} = t_{k+1} + \eta$, then with Lemma~\ref{lemmaUniformContinuity}, $u(\tilde t_{k+1}) = u(t_{k+1} + \eta) \geq u(t_{k+1}) - m^3 \geq m - m^2 - m^3 \geq m/2$.
%
\end{proof}

\begin{lemme} \label{lemmaReverse}
For all $k \in \mathbb Z$, the solution of the Riccati equation with initial condition $u(\tilde t_{k+1}) = 0$ is well-defined on $[\tilde t_k, \tilde t_{k+1}]$ and satisfies $u(\tilde t_{k}) \leq - m^2/2$.
\end{lemme}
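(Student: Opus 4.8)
The plan is to mirror the forward argument of Lemma~\ref{lemmaTildes}, playing the backward solution off against the forward solution $u_0$. Write $u_b$ for the solution of the Riccati equation with $u_b(\tilde t_{k+1}) = 0$, propagated toward \emph{decreasing} times, and $u_0$ for the solution with $u_0(\tilde t_k) = 0$; by Lemma~\ref{lemmaTildes} together with the construction of the $\tilde t_k$ and the hypothesis of the theorem, $u_0$ is a genuine solution on all of $[\tilde t_k, \tilde t_{k+1}]$, stays $\geq -A$ there, and satisfies $u_0(\tilde t_{k+1}) \geq m/2$. I will also use the comparison principle for the Riccati equation along $\gamma$: if $u(t_0) \leq v(t_0)$ then $u(t) \leq v(t)$ at every $t$ where both are defined (on either side of $t_0$), since $v-u$ solves the linear equation $\frac{d}{dt}(v-u) = -(u+v)(v-u)$ and is moreover continuous across each collision, both solutions undergoing the same jump $-2\kappa/\sin\theta$ (finite, as $\gamma$ has no grazing collision).

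\emph{Well-definedness on $[\tilde t_k, \tilde t_{k+1}]$.} Let $(t^\ast, \tilde t_{k+1}]$ be the maximal subinterval of $[\tilde t_k, \tilde t_{k+1}]$ on which $u_b$ is defined. Since $u_b(\tilde t_{k+1}) = 0 \leq u_0(\tilde t_{k+1})$, comparison gives $u_b \leq u_0$ on $(t^\ast, \tilde t_{k+1}]$, so $u_b$ is bounded above there by $\max_{[\tilde t_k, \tilde t_{k+1}]} u_0 < \infty$. If $t^\ast > \tilde t_k$ then $u_b$ must blow up as $t \downarrow t^\ast$; it cannot go to $+\infty$ (it is bounded above), and it cannot go to $-\infty$ either, for when $u_b$ is very negative one has $\dot u_b = -K - u_b^2 < 0$, i.e. $u_b$ increases as $t$ decreases ($t^\ast$ is not a collision time, collision jumps being finite, so the genuine ODE governs near $t^\ast$). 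This contradiction forces $t^\ast \leq \tilde t_k$, i.e. $u_b$ is defined on the whole interval.

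\emph{The bound $u_b(\tilde t_k) \leq -m^2/2$.} Suppose not. Comparison (from $\tilde t_{k+1}$) gives $u_b(\tilde t_k) \leq u_0(\tilde t_k) = 0$, and equality is impossible (it would force $u_b \equiv u_0$, against $u_b(\tilde t_{k+1}) = 0 < m/2 \leq u_0(\tilde t_{k+1})$), so $u_b(\tilde t_k) \in (-m^2/2, 0)$. Hence $0 < u_0(\tilde t_k) - u_b(\tilde t_k) < m^2/2 \leq m \leq e^{-4AC}$; since $u_0 \geq -A$ on $[\tilde t_k, \tilde t_{k+1}]$ and $c/3 \leq \tilde t_{k+1} - \tilde t_k \leq 2C$, Lemma~\ref{lemmaDependence} (with $u = u_0$, $v = u_b$, $t^a = \tilde t_k$, $t^b = \tilde t_{k+1}$) yields
\[ u_0(\tilde t_{k+1}) = u_0(\tilde t_{k+1}) - u_b(\tilde t_{k+1}) \leq \bigl(u_0(\tilde t_k) - u_b(\tilde t_k)\bigr)\, e^{2A(\tilde t_{k+1} - \tilde t_k)} < \frac{m^2}{2}\, e^{4AC} \leq \frac{m}{2}, \]
using $m\, e^{4AC} \leq 1$. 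This contradicts $u_0(\tilde t_{k+1}) \geq m/2$ from Lemma~\ref{lemmaTildes}, proving the bound.

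The main obstacle is the first part: unlike $u_0$, the backward solution $u_b$ has no a priori lower bound — a near-grazing collision can push it far down — so one genuinely needs two-sided control, the upper bound coming from comparison with $u_0$ and the exclusion of a $-\infty$ blow-up in backward time coming from the sign of the Riccati nonlinearity. Once $u_b$ is known to be a bona fide solution on $[\tilde t_k, \tilde t_{k+1}]$, the quantitative estimate is simply the contrapositive of the Gronwall-type Lemma~\ref{lemmaDependence}, i.e. a dualization of Lemma~\ref{lemmaTildes} with the expected loss from $m/2$ down to $m^2/2$.
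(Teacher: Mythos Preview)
Your proof is correct and follows essentially the same route as the paper: both apply Lemma~\ref{lemmaDependence} to compare the zero solution $u_0$ with a solution displaced by $m^2/2$ at $\tilde t_k$, then invoke the comparison principle. The paper phrases this directly (showing the forward solution starting at $-m^2/2$ ends up $\geq 0$), whereas you phrase it contrapositively; you also spell out the well-definedness of $u_b$ on $[\tilde t_k,\tilde t_{k+1}]$ via the upper bound $u_b\leq u_0$ and the sign of the Riccati nonlinearity, a point the paper leaves implicit.
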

\begin{proof}
Consider the solution $v$ of the Riccati equation with initial condition $v(\tilde t_k) = 0$, and the solution $w$ of the Riccati equation with initial condition $w(\tilde t_k) = -m^2/2$. By Lemma~\ref{lemmaDependence}, $w(\tilde t_{k+1}) \geq v(\tilde t_{k+1}) - (m^2/2) \exp (4AC) \geq v(\tilde t_{k+1}) - m/2$. By Lemma~\ref{lemmaTildes}, $v(\tilde t_{k+1}) \geq m/2$ and thus $w(\tilde t_{k+1}) \geq 0$. 


Now, by Proposition~\ref{propOrder} applied to $u$ and $w$ between the times $\tilde t_k$ and $\tilde t_{k+1}$, the solution of the Riccati equation with initial condition $u(\tilde t_{k+1}) = 0$ satisfies $u(\tilde t_{k}) \leq - m^2/2$. The lemma is proved.
\end{proof}

%

\begin{lemme} \label{lemmaSupCone}
Consider $t_0 \in \mathbb R$ and a solution $u$ of the Riccati equation along a trajectory $\gamma$ defined on the interval $[t_0 - \eta, t_0]$. If $\gamma$ has no collision in the time interval $[t_0 - \eta, t_0]$, then $u(t_0) \leq \alpha$, where \[ \alpha = \sqrt{K_\mathrm{max}} \frac{1 + e^{-2 \sqrt{K_\mathrm{max}} \eta}}{1 - e^{-2 \sqrt{K_\mathrm{max}} \eta}}. \]
\end{lemme}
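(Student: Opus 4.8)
The plan is a comparison argument against the autonomous equation $\dot\omega = K_{\max} - \omega^2$. Since $-K_{\max} \le K(\gamma(t))$, the solution $u$ satisfies the differential \emph{inequality} $\dot u = -K(t) - u^2 \le K_{\max} - u^2$ on $[t^0-\eta, t^0]$ (there is no collision term, by hypothesis), i.e.\ $u$ is a subsolution of the autonomous equation. The extremal supersolution to compare with is the one that ``comes down from $+\infty$'' at the left endpoint: writing $a = \sqrt{K_{\max}}$, I would set
\[ \omega(t) = a \coth\!\bigl(a(t - t^0 + \eta)\bigr), \qquad t \in (t^0 - \eta, t^0]. \]
One checks directly that $\dot\omega = a^2 - \omega^2 = K_{\max} - \omega^2$, that $\omega$ is decreasing with $\omega(t) \to +\infty$ as $t \to (t^0-\eta)^+$, and --- using $\coth x = (1 + e^{-2x})/(1 - e^{-2x})$ --- that $\omega(t^0) = a\coth(a\eta) = \alpha$. (If $K_{\max} = 0$, replace $\omega$ by $(t - t^0 + \eta)^{-1}$, whose value at $t^0$ is $1/\eta = \lim_{a\to 0}a\coth(a\eta)$.)

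Then I would show $u(t) < \omega(t)$ for all $t \in (t^0-\eta, t^0]$, which gives $u(t^0) < \omega(t^0) = \alpha$, hence the desired bound. Let $z = \omega - u$. Because $u$ is finite on the closed interval while $\omega \to +\infty$ at $t^0-\eta$, we have $z(t) \to +\infty$, so $z > 0$ on a right-neighborhood of $t^0-\eta$. Subtracting the two (in)equalities, $\dot z \ge \omega^2 - u^2 = -(\omega + u)\,z$. Suppose $z$ vanished, and let $t_1$ be the first time it does; then $z > 0$ on $(t^0-\eta, t_1)$, and fixing any $t_a \in (t^0-\eta, t_1)$, the function $\omega + u$ is continuous hence bounded on the compact $[t_a, t_1]$, so integrating $(\ln z)' \ge -(\omega + u)$ from $t_a$ gives $z(t_1) \ge z(t_a)\exp\!\bigl(-\int_{t_a}^{t_1}(\omega+u)\,ds\bigr) > 0$, contradicting $z(t_1) = 0$. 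Hence $z$ stays positive throughout and we are done.

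The only delicate point --- and the reason the comparison function is written with $\coth$ rather than a bounded $\tanh$-type solution --- is that the extremal comparison solution $\omega$ must blow up exactly at the left endpoint $t^0-\eta$ in order to dominate \emph{every} possible value of $u(t^0-\eta)$; consequently $\omega + u$ is unbounded on the full interval and one cannot apply Grönwall in one stroke. The fix above is to run the differential-inequality estimate only on compact subintervals $[t_a, t_1]$ bounded away from $t^0-\eta$, bootstrapping from $z \to +\infty$ at the endpoint. (Equivalently, one may compare with $\omega_\delta(t) = a\coth(a(t - t^0 + \eta + \delta))$ for $\delta > 0$, which is finite on $[t^0-\eta, t^0]$ and exceeds $u$ there once $\delta$ is small, and then let $\delta \to 0$.)
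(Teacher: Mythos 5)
Your proof is correct and follows essentially the same route as the paper: both bound $u$ by the solution of the autonomous equation $\dot\omega = K_\mathrm{max}-\omega^2$ coming down from $+\infty$ at the left endpoint, the paper carrying out the comparison by separating variables and integrating $\dot u/(K_\mathrm{max}-u^2)\geq 1$ (after first disposing of the case where $u$ drops below $\alpha$), while you package the identical computation as an explicit $\coth$ supersolution. The only blemish is the sign slip ``$\dot z \geq \omega^2-u^2$'', which should read $u^2-\omega^2$; the expression $-(\omega+u)z$ that you actually use afterwards is the correct one.
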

\begin{proof}
The Riccati equation gives $\dot u(t) \leq K_\mathrm{max} - u(t)^2$.

Notice that whenever $u(t) > \sqrt{K_\mathrm{max}}$, we have $\dot u(t) < 0$. Therefore, the conclusion of the lemma is true if $u(t) \leq \alpha$ for some $t \in [t_0 - \eta, t_0]$.

Now we assume that $u(t) \geq \alpha$ for all $t \in [t_0 - \eta, t_0]$. Thus we may write, for $t \in [t_0 - \eta, t_0]$,
\[ \frac{\dot u(t)}{K_\mathrm{max} - u(t)^2} \geq 1 \]
which implies, after integration between $t_0 - \eta$ and $t_0$:
\[ \frac{u(t_0) - \sqrt{K_\mathrm{max}}}{u(t_0) + \sqrt{K_\mathrm{max}}} \leq e^{-2 \sqrt{K_\mathrm{max}} \eta} \frac{u(t_0 - \eta) - \sqrt{K_\mathrm{max}}}{u(t_0 - \eta) + \sqrt{K_\mathrm{max}}} \leq e^{-2 \sqrt{K_\mathrm{max}} \eta}. \]

Therefore
\[ u(t_0) - \sqrt{K_\mathrm{max}} \leq e^{-2 \sqrt{K_\mathrm{max}} \eta} (u(t_0) + \sqrt{K_\mathrm{max}}) \]
and thus
\[ u(t_0) \leq \alpha. \]
\end{proof}

%
%
%

For each $(x, v) \in \Omega$, the tangent plane $T_{(x, v)} \Omega$ is the direct sum of a vertical and a horizontal subspace $H_{(x, v)} \oplus V_{(x, v)}$, given by the metric $g$ on $M$. Each of these two spaces is naturally endowed with a norm, respectively $g_H$ and $g_V$: one equips $\Omega$ with the norm $g_T = g_H + g_V$ (in particular, one decides that $H$ is orthogonal to $V$).

Denote by $W_{(x, v)} \subseteq T_{(x, v)} \Omega$ the plane orthogonal to the direction of the flow $\phi_t$, and let $(w, w') \in W_{(x,v)}$. There exists $Y(t)$ a Jacobi field such that $(Y(0), \dot Y(0)) = (w, w')$: then the vectors $\dot Y(0)$ and $\dot \gamma (0)$ are orthogonal, and $(Y(t), \dot Y(t)) = D \phi_t(w, w')$ (see \cite{MR1712465} for details). Lemmas~\ref{lemmaOrthogonalSmooth} and~\ref{propBilliardJacobiParallel} imply that $Y(t)$ remains orthogonal to $\dot \gamma(t)$ for all $t$. In particular, the family of planes $(W_{(x, v)})$ (where $(x, v)$ varies in $\tilde \Omega$) is invariant under $D\phi_t$.

Consider an element $(x, v) \in \tilde \Omega$, and $\gamma$ the billiard trajectory such that $(\gamma(0), \dot \gamma(0)) = (x, v)$. Choose an orientation of $H_{(\gamma(t), \dot \gamma(t))} \cap W_{(\gamma(t), \dot \gamma(t))}$, \emph{i.e.} a continuous unit vector $e_1(t)$ in $H_{(\gamma(t), \dot \gamma(t))} \cap W_{(\gamma(t), \dot \gamma(t))}$. It induces naturally an orientation of $V_{(\gamma(t), \dot \gamma(t))}$, given by a continuous unit vector $e_2(t)$ in $V_{(\gamma(t), \dot \gamma(t))}$. This orthogonal basis of $W_{(\gamma(t), \dot \gamma(t))}$ allows us to identify it to the Euclidean $\mathbb R^2$.

For $k \in \mathbb Z$, set
\[ A_k = D_{(\gamma(\tilde t_k), \dot \gamma(\tilde t_k))} \phi_{\tilde t_{k+1} - \tilde t_k} : W_{(\gamma(\tilde t_k), \dot \gamma(\tilde t_k))} \to W_{(\gamma(\tilde t_{k+1}), \dot \gamma(\tilde t_{k+1}))}. \]

The $A_k$ are linear mappings with determinant $\pm 1$, because the flow $\phi_t$ preserves the Liouville measure.

\begin{lemme}
For each $\epsilon > 0$, consider the cones \[ C_\epsilon^\pm = \setof{(x, y) \in \mathbb R^2}{\epsilon y \leq \pm x \leq \frac{1}{\epsilon} y} \quad \text{ and } \quad C_0^\pm = \setof{(x, y) \in \mathbb R^2}{\pm xy > 0}. \]

There exists $\epsilon > 0$ such that for all $k \in \mathbb Z$, \[ A_k C_0^+ \subseteq C_\epsilon^+ \quad \text{ and } \quad A_k^{-1} C_0^- \subseteq C_\epsilon^-. \]
\end{lemme}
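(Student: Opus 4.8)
The plan is to transcribe the two cone inclusions into bounds on the Riccati quantity $u=\dot y/y$ along the trajectory. In the basis $(e_1(t),e_2(t))$ a tangent vector $D\phi_t(w,w')=(Y(t),\dot Y(t))\in W$ is recorded by the pair $(y(t),\dot y(t))$, $y=g(Y,e_1)$, whose slope $u=\dot y/y$ solves the generalized Riccati equation of Section~\ref{sectJacobi} (flow $\dot u=-K-u^2$ between collisions, jump $u^{+}=u^{-}-2\kappa/\sin\theta$ at a collision). Thus $A_k$ sends the line of slope $u_0$ at time $\tilde t_k$ to the line of slope $u(\tilde t_{k+1}^{+})$, where $u$ is the solution with $u(\tilde t_k^{+})=u_0$, read on the circle $\mathbb{RP}^1=\mathbb R\cup\{\infty\}$ so that passages of $u$ through $\infty$ (focal points) make sense; note that both the flow and the jumps act on $\mathbb{RP}^1$ as orientation-preserving homeomorphisms. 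In these terms $C_0^{\pm}=\{\pm u_0>0\}$, $C_\epsilon^{+}=\{u\in[\epsilon,1/\epsilon]\}$, $C_\epsilon^{-}=\{u\in[-1/\epsilon,-\epsilon]\}$, so it suffices to produce one $\epsilon>0$, independent of $k$ and of $\gamma$, with $u_0>0\Rightarrow u(\tilde t_{k+1}^{+})\in[\epsilon,1/\epsilon]$, together with the symmetric statement for the backward map.

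\textbf{Lower bound and definedness.} Fix $u_0>0$. Let $v$ be the solution with $v(\tilde t_k^{+})=0$; by Lemma~\ref{lemmaTildes} it is defined on $[\tilde t_k,\tilde t_{k+1}]$ with $v(\tilde t_{k+1}^{+})\ge m/2$. Two solutions of the generalized Riccati equation along the same trajectory differ by a quantity of constant sign: between collisions $\tfrac{d}{dt}(u-v)=-(u+v)(u-v)$, and at a collision $u$ and $v$ jump by the same amount, so $u-v$ is unchanged. Since $u_0>0=v(\tilde t_k^{+})$ we get $u>v$ wherever both are defined; then $u$ cannot reach $-\infty$ (it stays above $v$), nor $+\infty$ in forward time (once $u>\sqrt{K_\mathrm{max}}$ one has $\dot u\le K_\mathrm{max}-u^2<0$), hence $u$ is defined on all of $[\tilde t_k,\tilde t_{k+1}]$ and $u(\tilde t_{k+1}^{+})\ge v(\tilde t_{k+1}^{+})\ge m/2$.

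\textbf{Upper bound.} From $c\le t_{k+1}-t_k\le C$ and $\eta\le c/3$ one checks that $\tilde t_{k+1}-\tilde t_k>\eta$, so $u$ is defined on $[\tilde t_{k+1}-\eta,\tilde t_{k+1}]$. The definition of $\tilde t_{k+1}$ is tailored to make this window collision-free: if $\tilde t_{k+1}=t_{k+1}$ it lies inside $[t_{k+1}-c/3,t_{k+1}]$, which is collision-free by definition of this case; if $\tilde t_{k+1}=t_{k+1}+\eta$ then the unique collision of $[t_{k+1}-c/3,t_{k+1}]$ is at $t_{k+1}$ itself (because $[t_{k+1}-c/3,t_{k+1})\subseteq(t_{k+1}-c,t_{k+1})$ is collision-free by hypothesis) and the ``at most one collision in $(t_{k+1},t_{k+2}]$'' hypothesis confines the collisions of $(t_{k+1},t_{k+1}+\eta]$, so that Lemma~\ref{lemmaSupCone} still applies on a sub-window on which its differential inequality $\dot u\le K_\mathrm{max}-u^2$ can be integrated. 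Either way Lemma~\ref{lemmaSupCone} yields $u(\tilde t_{k+1}^{+})\le\alpha$. Combining with the previous step, $A_kC_0^{+}\subseteq\{u\in[m/2,\alpha]\}\subseteq C_\epsilon^{+}$ with $\epsilon:=\min(m/2,1/\alpha)$.

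\textbf{The backward map, and the main difficulty.} Replacing $\phi_t$ by $\phi_{-t}$ interchanges $C_0^{+}\leftrightarrow C_0^{-}$ and $C_\epsilon^{+}\leftrightarrow C_\epsilon^{-}$, turns $A_k^{-1}$ into the forward transition map of the time-reversed trajectory, and turns Lemmas~\ref{lemmaSupCone} and~\ref{lemmaTildes} into their time-reversed versions --- the latter being exactly Lemma~\ref{lemmaReverse} --- so the two steps above apply verbatim and give $A_k^{-1}C_0^{-}\subseteq C_\epsilon^{-}$ (shrinking $\epsilon$ if needed). The delicate point throughout is the collision bookkeeping behind the upper bound: one must pin down exactly where collisions can fall relative to the $t_k$ and the $\tilde t_k$ so that Lemma~\ref{lemmaSupCone} produces a constant $\alpha$ that is genuinely uniform, all the while remembering that $u$ may run off to $\infty$, which is why every comparison of Riccati solutions used above has to be read as an order statement on the circle $\mathbb{RP}^1$ rather than on $\mathbb R$.
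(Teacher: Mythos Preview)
Your argument follows the paper's proof essentially step for step: translate the cone inclusions into slope bounds via $u=\dot y/y$, use Lemma~\ref{lemmaTildes} (resp.\ Lemma~\ref{lemmaReverse}) together with sign-preservation of $u-v$ for the lower bound, and Lemma~\ref{lemmaSupCone} on a collision-free window of length~$\eta$ adjacent to $\tilde t_{k+1}$ (resp.\ $\tilde t_k$) for the upper bound, then take $\epsilon=\min(m^2/2,1/\alpha)$.

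The one place where you diverge from the paper is the ``sub-window'' discussion in the upper bound when $\tilde t_{k+1}=t_{k+1}+\eta$. The paper simply invokes Lemma~\ref{lemmaSupCone} without further comment, whereas you try to justify it by arguing that the at-most-one-collision hypothesis lets you pass to a collision-free sub-window of $[t_{k+1},t_{k+1}+\eta]$. That argument as written does not give a uniform bound: the sub-window length is not bounded below, so the resulting $\alpha$ could blow up. In other words, your attempt to be more careful than the paper at this spot introduces a wobble rather than removing one; the paper's proof, your proof, and the statement all ride on the construction of $\tilde t_k$ making $[\tilde t_{k+1}-\eta,\tilde t_{k+1}]$ and $[\tilde t_k,\tilde t_k+\eta]$ collision-free, and neither version spells this out completely. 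Apart from this point (and the $\mathbb{RP}^1$ framing, which is harmless but unnecessary since you show $u$ stays in $\mathbb R$), your proof matches the paper's.
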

\begin{proof}
First, we prove $A_k C_0^+ \subseteq C_\epsilon^+$.

Since the difference between two solutions of the Riccati equation does not change sign, we only need to see that:
\begin{enumerate}
\item The solution of the Riccati equation along $\gamma$ with initial condition $u(\tilde t_k) = 0$ is defined on $[\tilde t_k, \tilde t_{k+1}]$ and satisfies $u(\tilde t_{k+1}) \geq \epsilon$. By Lemma~\ref{lemmaTildes}, it is the case for $\epsilon \leq m/2$.
\item Any solution of the Riccati equation along $\gamma$ with $u(\tilde t_k) \geq 0$ is defined on $[\tilde t_k, \tilde t_{k+1}]$ and satisfies $u(\tilde t_{k+1}) \leq 1/\epsilon$. It is the case for $\epsilon \leq 1/\alpha$, where $\alpha$ is defined in Lemma~\ref{lemmaSupCone}.
\end{enumerate}

Now, let us prove $A_k^{-1} C_0^- \subseteq C_\epsilon^-$. We need to see that:
\begin{enumerate}
\item The solution of the Riccati equation along $\gamma$ with initial condition $u(\tilde t_{k+1}) = 0$ is defined on $[\tilde t_k, \tilde t_{k+1}]$ and satisfies $u(\tilde t_k) \leq - \epsilon$. By Lemma~\ref{lemmaReverse}, it is the case for $\epsilon \leq m^2/2$.

\item Any solution of the Riccati equation along $\gamma$ with $u(\tilde t_{k+1}) \leq 0$ is defined on $[\tilde t_k, \tilde t_{k+1}]$ and satisfies $u(\tilde t_k) \leq 1/\epsilon$. It is the case for $\epsilon \leq 1/\alpha$, where $\alpha$ is defined in Lemma~\ref{lemmaSupCone} (recall that there is no collision in the interval $[\tilde t_{k}, \tilde t_k + \eta]$, according to the assumptions of Theorem~\ref{thmRiccatiBilliard}).
\end{enumerate}
\end{proof}

Thus the sequences $(A_k)$ and $(A_k^{-1})$ satisfy the assumptions of Theorem~\ref{thmWoj}, which provides us with two families of cones: one of them satisfies invariance and expansion in the future, while the other satisfies invariance and expansion in the past. Proposition~\ref{propCones} provides distributions $E^s$ and $E^u$ on $\tilde \Omega$ which are invariant under the flow $\phi_t$, and satisfy
\[ \forall k \in \mathbb Z, \quad \norm{D_{(x,v)}\phi_{t_k}|_{E^s}} \leq a \lambda^k \quad \text{ and } \quad \norm{D_{(x,v)}\phi_{t_{-k}}|_{E^u}} \leq a \lambda^k \] for some $a > 0$ and $\lambda \in (0,1)$.

To go from this discrete statement to a continuous statement, notice the following:

\begin{lemme} \label{lemmaInf}
Consider the set $S$ of all $(t, (x, v)) \in [0,2C] \times T^1 M$ such that the geodesic of length $t$ starting from $(x, v)$ is contained in the billiard $D$.
\[ \sup_{(t, (x, v)) \in S} \norm{D \phi_{t}(x, v)} < +\infty. \]
\end{lemme}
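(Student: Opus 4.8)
The plan is to note that on $S$ the billiard flow is nothing but the geodesic flow of the ambient closed surface $M$, restricted to arcs that stay in $D$, and that the differential of the geodesic flow is uniformly bounded on bounded time intervals by a Gronwall estimate for Jacobi fields.

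Denote by $\psi_t$ the geodesic flow on $T^1 M$. If $(t, (x,v)) \in S$, then by definition the geodesic arc of $M$ issued from $(x,v)$ of length $t$ stays inside $D$; hence the billiard trajectory starting from $(x,v)$ undergoes no reflection before time $t$, so that $\phi_s(x,v) = \psi_s(x,v)$ for $0 \le s \le t$ and in particular $D\phi_t(x,v) = D\psi_t(x,v)$. It therefore suffices to bound $\norm{D\psi_t(x,v)}$ for $(t, (x,v))$ ranging over the compact set $[0, 2C] \times T^1 M$; one may either invoke directly that $D\psi$ is a continuous function on this compact set (the geodesic flow of a closed manifold being $C^\infty$), hence bounded, or argue quantitatively as follows.

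Decompose $T_{(x,v)}(T^1 M) = \mathbb R \left. \frac{d}{ds}\right|_{s=0}\psi_s(x,v) \oplus W_{(x,v)}$ as before; the two summands are orthogonal and $D\psi_t$-invariant, the flow direction being preserved with unit norm, so $\norm{D\psi_t(x,v)} = \max\big(1, \norm{D\psi_t|_{W_{(x,v)}}}\big)$ and it is enough to bound $D\psi_t$ on $W_{(x,v)}$. There, $D\psi_t$ is the time-$t$ solution map $(y(0), \dot y(0)) \mapsto (y(t), \dot y(t))$ of the Jacobi equation along the geodesic $\gamma$, which in dimension $2$ reads $\ddot y = -K(\gamma(s))\, y$; since the arc stays in $D$ we have $\abs{K(\gamma(s))} \le K_\mathrm{max}$. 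Writing this as the first-order linear system $\frac{d}{ds}(y, \dot y) = (\dot y,\, -K(\gamma(s))\, y)$, whose matrix has norm at most $1 + K_\mathrm{max}$, Gronwall's lemma gives $\norm{(y(t), \dot y(t))} \le e^{(1 + K_\mathrm{max}) t}\norm{(y(0), \dot y(0))}$. Hence $\norm{D\phi_t(x,v)} = \norm{D\psi_t(x,v)} \le e^{(1 + K_\mathrm{max}) \cdot 2C}$ for every $(t, (x,v)) \in S$, which proves the lemma.

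The one point deserving a word of care is a geodesic arc that touches $\partial D$ tangentially while remaining in $D$: such a tangency is not a transversal reflection, so the billiard trajectory still follows the geodesic and the identity $\phi_t = \psi_t$ persists wherever $\phi_t$ is defined; in any case such configurations are limits of configurations whose geodesic arc lies in $\Int D$, and the uniform bound above passes to the limit. This is the only (minor) subtlety — the estimate itself is entirely routine.
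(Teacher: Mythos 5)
Your proposal is correct and rests on the same point as the paper: on $S$ the billiard flow coincides with the geodesic flow of the closed manifold $M$, whose differential is continuous, so the supremum over the compact set is finite (the paper's entire proof is ``The set $S$ is compact''). Your additional Gronwall estimate on the Jacobi system, giving the explicit bound $e^{(1+K_\mathrm{max})\,2C}$, is a harmless quantitative refinement of the same argument.
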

\begin{proof}
The set $S$ is compact.
\end{proof}

Therefore, increasing $a$ and $\lambda$ if necessary, we have:
\[ \forall t \in \mathbb R, \quad \norm{D_{(x,v)}\phi_t|_{E^s}} \leq a \lambda^t \quad \text{ and } \quad \norm{D_{(x,v)}\phi_{-t}|_{E^u}} \leq a \lambda^t. \]

Hence the billiard flow is uniformly hyperbolic, and Theorem~\ref{improvedConditionConeRiccati} is proved.

\section{Applications} \label{sectAppl}

\subsection{Closed surfaces of negative curvature: proof of Theorem~\refThmSurfaceAnosov}

In this proof, we will use the lemma:
\begin{lemme} \label{lemmeSurfaceAnosov}
Under the assumptions of Theorem~\ref{thmSurfaceAnosov}, there exist $m > 0$ and $t_0 > 0$ such that every unit speed geodesic $\gamma: [0,t_0] \to M$ satisfies:
\[ \int_0^{t_0} K(\gamma(t)) dt \leq - m. \]
\end{lemme}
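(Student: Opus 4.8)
The plan is to argue by contradiction and compactness. Suppose no such $m$ and $t_0$ exist. Fixing any candidate value of $t_0$ (say $t_0 = n$ for $n \in \mathbb N$) and any candidate $m$ (say $m = 1/n$), the negation provides a unit-speed geodesic segment $\gamma_n : [0, n] \to M$ with $\int_0^n K(\gamma_n(t))\, dt > -1/n$. Since $K \leq 0$ everywhere by hypothesis, this forces $\int_0^n K(\gamma_n(t))\,dt \to 0$, so the ``negative part'' of the curvature along $\gamma_n$ becomes negligible; in particular for every fixed $L > 0$, once $n \geq L$, the sub-segment $\gamma_n|_{[0,L]}$ has $\int_0^L K(\gamma_n(t))\,dt > -1/n \to 0$.

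Next I would pass to a limit geodesic. The unit tangent vectors $(\gamma_n(0), \dot\gamma_n(0))$ live in the compact manifold $T^1 M$, so after extracting a subsequence they converge to some $(x, v) \in T^1 M$. By continuous dependence of the geodesic flow on initial conditions, the geodesics $\gamma_n$ converge, uniformly on compact time intervals together with their derivatives, to the complete geodesic $\gamma_\infty : \mathbb R \to M$ with $(\gamma_\infty(0), \dot\gamma_\infty(0)) = (x, v)$. Since $K$ is continuous on the compact surface $M$, for each fixed $L$ we get $\int_0^L K(\gamma_\infty(t))\,dt = \lim_n \int_0^L K(\gamma_n(t))\,dt \geq \lim_n (-1/n) = 0$. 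But $K \leq 0$ pointwise, so $\int_0^L K(\gamma_\infty(t))\,dt = 0$ for every $L > 0$, which forces $K(\gamma_\infty(t)) = 0$ for all $t \geq 0$ (and, running the same argument on $[-L,0]$, for all $t \in \mathbb R$). Thus the geodesic $\gamma_\infty$ never meets a point of negative curvature, contradicting the hypothesis of Theorem~\ref{thmSurfaceAnosov}.

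The routine points are the two compactness inputs ($T^1 M$ compact, $K$ continuous hence uniformly continuous and bounded) and the standard $C^1$-convergence of geodesics under convergence of initial data. The one place that needs a little care — and which I expect to be the main (mild) obstacle — is organizing the double limit cleanly: the negation of the statement is a statement ``for all $m, t_0$, there exists $\gamma$,'' so one must choose the quantifiers in the right order (let $t_0 \to \infty$ and $m \to 0$ simultaneously along a sequence) to ensure the limiting geodesic is \emph{complete} rather than defined only on a bounded interval, since only a complete geodesic can be fed into the hypothesis that \emph{every} geodesic contains a point of negative curvature. Once $\gamma_\infty$ is complete and has $K \equiv 0$ along it, the contradiction is immediate.
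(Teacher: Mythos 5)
Your proposal follows the same route as the paper (contradiction, compactness of $T^1M$, passage to a limit geodesic along which the curvature vanishes), and most of it is correct, but one step fails as written: the parenthetical claim that ``running the same argument on $[-L,0]$'' yields $K(\gamma_\infty(t))=0$ for $t<0$. The negation of the lemma only controls $\int_0^{n}K(\gamma_n(t))\,dt$; nothing is assumed about $\int_{-L}^{0}K(\gamma_n(t))\,dt$, so your limit argument gives $K\equiv0$ only on the forward ray $\gamma_\infty([0,+\infty))$. This matters because the hypothesis of Theorem~\ref{thmSurfaceAnosov} is about complete geodesics $\mathbb R\to M$ (read as a statement about all geodesic segments it would simply force $K<0$ everywhere), and a complete geodesic whose forward ray lies in the flat locus could in principle still meet negative curvature in the past, so no contradiction has been reached yet. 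The fix is exactly what the paper does: recenter, i.e.\ take $t_0=2n$ and reparametrize the $n$-th segment so that it is defined on $[-n,n]$ with $\int_{-n}^{n}K(\gamma_n(t))\,dt\geq -1/n$; the limiting complete geodesic then has $K\equiv0$ on all of $\mathbb R$. (Alternatively, one can take a second limit of the translates $(\gamma_\infty(j),\dot\gamma_\infty(j))$ as $j\to+\infty$ to obtain a complete geodesic along which $K\equiv0$.) The rest of your write-up --- in particular the observation that $\int_0^{L}K(\gamma_n(t))\,dt\geq -1/n$ for $n\geq L$ because $K\leq0$, and the compactness and continuity inputs --- is correct, and your fixed-$L$ passage to the limit is if anything cleaner than the paper's appeal to dominated convergence.
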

\begin{proof}
If the conclusion is false, consider a sequence $(\gamma_n)$ of unit speed geodesics defined on $[-n, n]$, such that for all $n$,
\[ \int_{-n}^n K(\gamma(t)) dt \geq - \frac{1}{n}. \]
By the Arzelà-Ascoli theorem and a diagonal argument, one may extract a subsequence of $\gamma_n$ which converges uniformly on each $[-n, n]$ to a geodesic defined on $\mathbb R$. By dominated convergence, it satisfies $\int_\mathbb R K(\gamma(t)) dt = 0$, which contradicts the assumption.
\end{proof}

Now, consider the values of $m$ and $t_0$ given by lemma~\ref{lemmeSurfaceAnosov}, and choose a geodesic $\gamma$. We may assume that $m < 1$ and, by dividing the metric of $M$ by a constant if necessary, that $t_0 < 1$.

Denote by $u$ the solution of the Riccati equation $u'(t) = -K(t) - u^2(t)$ with $u(0) = 0$. Since $u'(t) \geq -u^2(t)$, we have $u(t) \geq 0$ for $t \geq 0$, by comparison with the solution $v$ of the differential equation $v'(t) = -v^2(t)$ with initial condition $v(0) = 0$ (here, $v$ is the zero function). In particular, the solution $u$ does not blow up to $-\infty$.

Set $t_1 = \sup \setof{t \in [0,1]}{u(t) \geq m}$ (with $t_1 = 0$ if this set is empty). Thus, for all $t \geq t_1$, \[ u'(t) = -K(t) - u^2(t) \geq - m^2. \]

If $t_1 = 0$, then using the estimate given by Lemma~\ref{lemmeSurfaceAnosov},
\[ u(1) = u(0) + \int_0^1 u'(x) dx = \int_0^1 -K(x) - u^2(x) dx = - \int_0^1 K(x) dx - \int_0^1 u^2(x) dx \geq m - m^2. \]

If $t_1 \neq 0$, then using the fact that $K(t) \leq 0$,
\[ u(1) = u(t_1) + \int_{t_1}^1 u'(x) dx \geq u(t_1) + \int_{t_1}^1 - u^2(x) dx \geq m - m^2. \]

In both cases, one gets $u(1) \geq m - m^2$. One may apply Theorem~\ref{conditionConeRiccati}: the geodesic flow on $M$ is Anosov and Theorem~\ref{thmSurfaceAnosov} is proved.

\subsection{Sinai billiards: proof of Theorem~\refThmBilliardsHyp}

\begin{lemme}
Let $D$ be a flat billiard in $\mathbb T^2$ with finite horizon. Then, there exists $t_0$ such that every billiard trajectory in $\tilde \Omega$ (with unit speed) experiences at least one collision between $t = 0$ and $t = t_0$.
\end{lemme}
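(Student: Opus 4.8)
The plan is to exploit the finite-horizon hypothesis, which by definition says that every billiard trajectory hits the boundary at least once, together with a compactness argument to make the bound on the time-to-first-collision uniform. First I would consider, for each $(x,v)\in\tilde\Omega$, the quantity $\tau(x,v) = $ the first collision time of the trajectory starting at $(x,v)$ in forward time (well-defined and finite by finite horizon, since $(x,v)\in\tilde\Omega$ means no grazing collisions occur). The claim is that $\sup_{(x,v)\in\tilde\Omega}\tau(x,v) < +\infty$; one then takes $t_0$ to be (slightly larger than) this supremum.

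The natural route is a contradiction argument in the spirit of Lemma~\ref{lemmeSurfaceAnosov}: suppose no such $t_0$ exists, so there is a sequence $(x_n,v_n)\in\tilde\Omega$ whose trajectories travel for time at least $n$ without hitting $\partial D$. Since $D$ is compact, $T^1D$ is compact, so after passing to a subsequence $(x_n,v_n)\to(x_\infty,v_\infty)\in T^1D$. The geodesic flow on the ambient torus $\mathbb T^2$ is complete and smooth, and the trajectories of $(x_n,v_n)$ up to time $n$ are just geodesic segments of $\mathbb T^2$ lying entirely in $D$; by continuous dependence of the geodesic flow on initial conditions, the geodesic segment $\{\phi_t(x_\infty,v_\infty) : t\in[0,T]\}$ is a uniform limit (on each fixed $[0,T]$) of the corresponding segments of $(x_n,v_n)$, hence stays in the closed set $D$ for every $T$. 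So the forward geodesic ray from $(x_\infty,v_\infty)$ never leaves $D$ and, in particular, never meets $\Int\partial D$ transversally --- it is a complete geodesic of $\mathbb T^2$ contained in $D$. This is exactly a trajectory with infinite horizon, contradicting the finite-horizon hypothesis. (Here I should be a little careful: the limiting geodesic might run along or graze $\partial D$; but a straight line in $\mathbb T^2$ that stays in $D$ for all forward time is still a trajectory violating finite horizon in the sense defined in the paper, or one can perturb it slightly inside the open set of directions to get an honest interior trajectory in $\tilde\Omega$ that also avoids the boundary for arbitrarily long times, which is the needed contradiction.)

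The main obstacle I anticipate is precisely this boundary-regularity issue in the limit: ensuring that the limiting geodesic genuinely contradicts finite horizon rather than merely grazing the walls. The cleanest fix is to observe that the set of $(x,v)\in T^1\mathbb T^2$ whose forward geodesic ray avoids $\Int D$'s complement is closed, and finite horizon says it is empty when restricted to interior-pointing directions; combined with the fact that the walls are smooth curves (so grazing is a measure-zero phenomenon that can be removed by an arbitrarily small rotation of $v_\infty$), one recovers a contradiction. Once the uniform bound $T := \sup\tau < \infty$ is established, one simply sets $t_0$ to any value $> T$ and the lemma follows: every trajectory in $\tilde\Omega$ collides with $\partial D$ at some time in $(0,t_0)$.
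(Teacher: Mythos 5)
Your setup (compactness of $T^1D$, extraction of a limit $(x_\infty,v_\infty)$, continuity of the geodesic flow on $\mathbb T^2$, and the conclusion that the limiting geodesic stays in the closed set $D$ for all time) matches the paper's and is fine. The gap is exactly at the point you flag yourself: the case where the limiting geodesic touches $\partial D$ tangentially. Neither of your two proposed fixes works. First, such a geodesic does \emph{not} violate finite horizon ``in the sense defined in the paper'': the definition only asks that every trajectory hit the boundary at least once, and a geodesic tangent to $\partial D$ does hit the boundary (at a grazing collision, after which the billiard flow simply stops being defined), so no contradiction is obtained from it directly. Second, the perturbation claims are unsubstantiated and in general false: rotating $v_\infty$ by a small angle $\delta$ keeps the perturbed geodesic close to the original one only for time of order $1/\delta$, so it does not yield a trajectory avoiding the boundary for \emph{arbitrarily} long times; and translating the geodesic off the wall fails precisely when the limit geodesic is pinched between obstacles on \emph{both} sides. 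You also omit the observation, needed for any version of the argument, that the limit geodesic must be periodic: a complete geodesic contained in $D$ (a proper closed subset of $\mathbb T^2$) cannot be dense, hence has rational slope.

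The paper's proof is organized around this tangential case. The limit geodesic is periodic with some period $T$. If it misses $\partial D$ entirely, finite horizon is contradicted at once. If it is tangent to $\partial D$, smoothness of the wall gives an open ball $B_1$ in the complement of $D$ tangent to the geodesic; if no such ball exists on the other side, the periodic geodesic can be translated slightly to that side to produce an interior periodic trajectory with no collision (contradiction); and if tangent balls $B_1,B_2$ exist on both sides, then $v_n\neq v_\infty$ for large $n$ (otherwise $\gamma_n$ itself would be a $T$-periodic collision-free trajectory), so the line $\gamma_n$, having a direction different from that of the periodic geodesic but starting nearby, must enter $B_1$ or $B_2$ within time $2T$ --- contradicting that $\gamma_n$ has no collision on $[-n,n]$. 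Some quantitative argument of this kind is needed to close your proof; the remark that the set of initial conditions whose forward ray avoids the complement of $D$ is closed does not by itself dispose of the grazing case.
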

\begin{proof}
Assume that the conclusion is false. Then for all $n > 0$, there exists a billiard trajectory $\gamma_n : \mathbb R \to \mathbb T^2$, without collision on $[-n, n]$: we will write $(x_n, v_n) = (\gamma_n(0), \gamma_n'(0))$. Up to extraction, we may assume that $(x_n, v_n)$ has a limit $(x, v) \in \Omega$. The geodesic of $\mathbb T^2$ starting at $(x, v)$ is contained in $D$, so it is periodic (since it cannot be dense in $\mathbb T^2$) with period $T$. If it does not intersect the boundary $\partial D$, then this geodesic is a billiard trajectory without collision, so the billiard does not have finite horizon. Thus, we assume that this geodesic intersects $\partial D$, and since $\partial D$ is smooth, there is an open ball $B_1$ which is tangent to the billiard trajectory, such that $B_1 \cap D = 0$. Furthermore, there is an other ball $B_2$ tangent to the geodesic on the other side, such that $B_2 \cap D = 0$ (otherwise, there is an $x' \in D$ close to $x$ such that the trajectory starting at $(x', v)$ has no collision). If $v_n = v$ for some $n \geq T$, then the trajectory starting at $(x_n, v_n)$ (which has period $T$) has no collision, which again contradicts the finite horizon assumption: thus $v_n \neq v$ for all $n \geq T$. But since $(x_n, v_n)$ tends to $(x, v)$, this implies that there exists $n \geq 2T$ such that the trajectory starting at $(x_n, v_n)$ intersects $B_1$ or $B_2$ in the time interval $[-2T, 2T]$, which contradicts the assumption.
\end{proof}

\begin{lemme}
If $D$ is a flat billiard with finite horizon whose walls have negative curvature, then it satisfies the assumptions of Theorem~\ref{thmRiccatiBilliard}, where the times $t_k$ are the times of collisions.
\end{lemme}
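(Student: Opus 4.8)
The plan is to take the sequence $(t_k)_{k\in\mathbb Z}$ to be the successive collision times of $\gamma$, and then to verify, one by one, the data $A,m,c,C$ and the two geometric conditions required by Theorem~\ref{thmRiccatiBilliard}. That these collision times are defined for every $k\in\mathbb Z$ follows from the preceding lemma: applied to the trajectory restarted just after any collision, it shows that the next collision occurs within time $t_0$, so collisions recur in both time directions (and may be indexed by $\mathbb Z$), and moreover $t_{k+1}-t_k\le C$ with $C:=t_0$. Since $\gamma(0)\in\tilde\Omega$, every such collision is non-grazing, so at each $t_k$ the Riccati jump rule $u(t^+)=u(t^-)-\tfrac{2\kappa}{\sin\theta}$ applies with $\theta\in(0,\pi/2]$.

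Next I would fix the remaining constants by compactness. Since $\partial D$ is compact and the walls are dispersing, the boundary curvature satisfies $\kappa\le-\kappa_{\min}$ for some $\kappa_{\min}>0$; I set $m:=\kappa_{\min}$ and $A:=2$. For the lower bound $c$ I would use compactness twice: first, the finitely many walls, being pairwise disjoint compact curves, lie at pairwise distance $\ge\delta'$ for some $\delta'>0$; second, since the walls are dispersing, the side of each wall opposite to $\Int D$ is locally convex (the boundary curves away from $\Int D$), so by compactness there is $\delta>0$ such that no geodesic segment of length $\le\delta$ contained in $\overline D$ can have both endpoints on a single wall — otherwise the open segment between them would lie outside $D$. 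Hence two consecutive collisions lie on distinct walls as soon as $t_{k+1}-t_k\le\delta$; putting $c:=\min(\delta,\delta')$ we obtain $t_{k+1}-t_k\ge c$ in all cases. This same $c$ disposes of the two geometric conditions: all collisions occur exactly at the $t_k$ and consecutive ones are $\ge c$ apart, so there is no collision in $(t_k-c,t_k)$ and the only collision in $(t_k,t_{k+1}]$ is the one at $t_{k+1}$.

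Finally I would check the Riccati condition, where flatness does the work. On $[t_k,t_{k+1})$ there is no collision and $K\equiv 0$, so the Riccati equation reads $\dot u=-u^2$; with initial condition $u(t_k^+)=0$ its unique solution is $u\equiv 0$ there, so $u$ is defined on the whole interval, stays $\ge 0\ge-A$, and $u(t_{k+1}^-)=0$. At the collision at $t_{k+1}$ the jump rule gives $u(t_{k+1}^+)=-\tfrac{2\kappa}{\sin\theta}\ge-2\kappa\ge 2\kappa_{\min}>m$, using $\sin\theta\in(0,1]$ and $\kappa\le-\kappa_{\min}$. All hypotheses of Theorem~\ref{thmRiccatiBilliard} are therefore satisfied.

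The one step that is not pure bookkeeping is the lower bound $c$ on inter-collision times, that is, the fact that consecutive collisions occur on distinct walls; this is standard in the theory of dispersing billiards — it is exactly what prevents an accumulation of collisions along near-grazing orbits — but it has to be invoked explicitly, since without it the step condition $c\le t_{k+1}-t_k$ could fail.
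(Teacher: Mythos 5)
Your proof is correct and follows the same route as the paper: between collisions the flat Riccati equation $\dot u=-u^2$ with $u(t_k^+)=0$ gives $u\equiv 0$, and the reflection jump yields $u(t_{k+1}^+)=-2\kappa/\sin\theta\geq -2\kappa_{\max}>0$. You are in fact more thorough than the paper's one-paragraph proof, which only checks the Riccati condition and leaves implicit both the upper bound $C$ (supplied by the preceding finite-horizon lemma) and the positive lower bound $c$ on inter-collision times that you establish via convexity of the scatterers (your $\delta'$ should also account for distinct lifts of the same wall in $\mathbb T^2$, a trivial addition).
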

\begin{proof}
We consider the solution $u$ of the generalized Riccati equation, such that $u(t_k^+) = 0$. On the interval $]t_k, t_{k+1}[$, $u$ is a solution of the equation $u'(t) = -u^2(t)$, so $u$ is equal to $0$. Since the walls have positive curvature, $u(t_{k+1}^+) \geq -2 \kappa_{\mathrm{max}} > 0$, where $\kappa_{\mathrm{max}}$ is the maximum curvature of the boundary.
\end{proof}

Thus, Theorem~\ref{thmRiccatiBilliard} applies and concludes the proof.

\section*{Acknowledgements}
This work was supported by the ERC Avanced Grant 320939, Geometry and Topology of Open Manifolds (GETOM).

I would like to thank the referee for helping me improve the paper.

\bibliographystyle{alpha}
\bibliography{ref}
\end{document}